\allowdisplaybreaks \numberwithin{equation}{section}
\numberwithin{equation}{section}
\newtheorem{theorem}{Theorem}[section]
\newtheorem{lemma}[theorem]{Lemma}
\theoremstyle{definition}
\newtheorem{definition}[theorem]{Definition}
\theoremstyle{remark}
\newtheorem{remark}[theorem]{Remark}
\begin{document}

\title[Nonlinear Stability of Planar Vortex Patches]
{Nonlinear Stability of planar vortex patches in an ideal fluid}

 \author{Daomin Cao and Guodong Wang}

\address{Institute of Applied Mathematics, Chinese Academy of Sciences, Beijing 100190, and University of Chinese Academy of Sciences, Beijing {\rm100049},  P.R. China}
\email{dmcao@amt.ac.cn}
\address{Institute for Advanced Study in Mathematics, Harbin Institute of Technology, Harbin {\rm150001}, P.R. China}
\email{wangguodong14@mails.ucas.ac.cn}


\begin{abstract}
In this paper, we prove nonlinear stability of planar vortex patches concentrated near an isolated minimum point of the Robin function in a general bounded domain. These vortex patches are stationary solutions of the two-dimensinal incompressible Euler equations. The result is obtained by showing that these concentrated vortex patches are in fact isolated maximizers of the kinetic energy among isovortical patches.
\end{abstract}

\maketitle

\section{Introduction}

In this paper, we consider the incompressible inviscid flow without external force in the plane, the  motion of which is governed by the following Euler equations:

\begin{equation}\label{1}
\begin{cases}
 \partial_t\mathbf{v}+(\mathbf{v}\cdot\nabla)\mathbf{v}=-\nabla P,\,\,x=(x^1,x^2)\in\mathbb R^2, t\in(0,+\infty),
\\ \nabla\cdot\mathbf{v}=0,
\end{cases}
\end{equation}
where $\mathbf{v}=(v_1,v_2)$ is the velocity field and $P$ is the scalar pressure. Here we assume that the fluid is of unit density.

Let $D\subset \mathbb{R}^2$ be a bounded and simply-connected domain with a smooth boundary, $\partial D$. When the fluid moves inside $D$, the impermeability boundary condition is usually imposed:
 \begin{equation}\label{2}
\mathbf{v}\cdot \mathbf{n}=0,
\end{equation}
where $\mathbf{n}$ is the outward unit normal of $\partial D$.
By introducing the vorticity function $\omega = \partial_{x^1} v_2-\partial_{x^2}v_1$ and using the identity $\frac{1}{2}\nabla|\mathbf{v}|^2=(\mathbf{v}\cdot\nabla)\mathbf{v}+J\mathbf{v}\omega$,
 the first equation of $\eqref{1}$ becomes
\begin{equation}\label{3}
 \partial_t\mathbf{v}+\nabla(\frac{1}{2}|\mathbf{v}|^2+P)-J\mathbf{v}\omega=0,
\end{equation}
where $J(v_1,v_2)=(v_2,-v_1)$ denotes clockwise rotation through ${\pi}/{2}$. Taking the curl on both sides of $\eqref{3}$ we get
\begin{equation}\label{499}
 \partial_t\omega+\mathbf{v}\cdot\nabla\omega=0.
\end{equation}

Since $\mathbf{v}$ is divergence-free and $D$ is simply-connected, by the Green formula $\mathbf{v}$ can be written as $\mathbf{v}=J\nabla\psi$ for some scalar function $\psi$. It is obvious that
\begin{equation}
 \begin{cases}
-\Delta \psi=\omega\text{ \quad \,\quad in $D$,}\\
\psi= \text{constant}  \text{\quad on $\partial D$.}
\end{cases}
\end{equation}
Without loss of generality, we can always assume that $\psi$ vanishes on $\partial D$ by adding a properly chosen constant. Therefore, $\psi$ can be expressed in terms of $\omega$ by
\[\psi(x)=G\omega(x):=\int_DG(x,y)\omega(y)dy,\]
 where $G$ is the Green function for $-\Delta$ in $D$ with zero Dirichlet
boundary condition, which has the following form
\[
G(x,y)=\frac{1}{2\pi}\ln \frac1{|x-y|}-h(x,y),   \,\,\,x,y\in
D.
\]

Taking into account the above relation between $\mathbf{v}$, $\omega$ and $\psi$, we are able to deduce the following equation satisfied by $\omega$:
\begin{equation}\label{599}
\partial_t\omega+J\nabla G\omega\cdot \nabla\omega=0,
\end{equation}
which is usually called the vorticity equation.
To deal with solutions with discontinuity, we need to interpret \eqref{599} in the weak sense.
\begin{definition}
We call $\omega\in L^\infty((0,+\infty)\times D)$ a weak solution to the vorticity equation \eqref{599} if
 \begin{equation}\label{997}
  \int_D\omega(x,0)\xi(x,0)dx+\int_0^{+\infty}\int_D\omega(\partial_t\xi+\nabla\xi\cdot J\nabla G\omega)dxdt=0,\,\,\, \forall \,\xi\in C_c^{\infty}([0,+\infty)\times D).
  \end{equation}
 \end{definition}
By standard elliptic regularity theory it is easy to check that  $\nabla G\omega\in L^\infty((0,+\infty)\times D)$, so the above definition makes sense.
By Yudovich \cite{Y}, for initial vorticity $\omega(\cdot,0)\in L^{\infty}(D)$, there is a unique weak solution $\omega$ to $\eqref{997}$, moreover,
 the distribution function of $\omega(\cdot,t)$ is independent of $t$, that is,
 \begin{equation}\label{112}
 |\{x\in D|\,\omega(x,t)>a\}|=|\{x\in D|\,\omega(x,0)>a\}|, \forall\, a \in\mathbb R, \,t\geq0,
 \end{equation}
where $|\cdot|$ denotes the two-dimensional Lebesgue measure.

For convenience, we also write $\omega(x,t)$ as $\omega_t(x)$. By $\eqref{112}$, if the initial vorticity $\omega_0$ has the form $\omega_0=\lambda I_{A_0}$, where $A_0\subset D$ is a measurable set, $\lambda\in \mathbb R$ represents the vorticity strength and $I_{A_0}$ denotes the characteristic function of $A_0$, i.e., $I_{A_0}(x)=1$ for $x\in A_0$ and $I_{A_0}=0$ elsewhere, then the evolved vorticity $\omega_t$ must be of the form $\omega_t=\lambda I_{A_t}$ with $|A_t|=|A_0|$ for all $t\geq 0$. We call such $\omega_t$ a vortex patch solution, or vortex patch briefly.

A weak solution of the vorticity equation is said to be steady if it does not depend on the time variable. Thus for any $\omega\in L^\infty(D)$, it is a steady solution if and only if
\begin{equation}\label{888}
\int_D\omega\nabla\xi\cdot J\nabla\psi dx=0,\,\forall\,\xi\in C^\infty_c(D).
\end{equation}
The search for dynamically possible steady solutions to the vorticity equation is an important and interesting problem in the study of two-dimensional incompressible Euler equations. It is easy to check that if $\omega\in L^\infty(D)$ satisfies
\[\omega=f(G\omega),\,\,\text{a.e. }x\in D,\]
where $f:\mathbb R\to\mathbb R$ is a Lipschitz continuous function, then $\omega$ must be a steady solution to the vorticity equation. Burton \cite{B3} proved that it is also true for any monotone function $f$.  As a special case, we immediately deduce that $\omega$ is a steady solution of the vorticity equation if it has the form
\begin{equation}\label{vp}
\omega=\lambda I_{\{G\omega>\mu\}},
\end{equation}
where $\lambda$ and $\mu$ are both positive constants. Here for simplicity we denote $\{x\in D|\,\, G\omega(x)>\mu\}$ by $\{G\omega>\mu\}$ and similar notations will be used in the sequel.





There are already several papers dealing with the existence of steady vortex patches of the form \eqref{vp}. See \cite{CPY}\cite{T} for example.
In this paper, we are mainly concerned with the nonlinear stability of such kind of steady vortex patches. Here by nonlinear stability we mean Liapunov type. To give the precise definition, let us first define $R_\omega$, the rearrangement class of $\omega$
\[
 R_\omega:=\{ w\in L^\infty(D)\, \mid |\{w>a\}|=|\{\omega>a\}| ,\,\,\forall\, a\in \mathbb R\}.
\]

\begin{definition}\label{def}
A steady vortex patch $\omega$ is called to be stable, if for any $\varepsilon >0$, there exists $\delta >0$, such that for any $\omega_0\in R_\omega$, $\|\omega_0-\omega\|_{L^1}<\delta$, there holds $\|\omega_t-\omega\|_{L^1}<\varepsilon$ for all $t\geq 0$, where $\omega_t(x)=\omega(x,t)$ is the solution of $\eqref{997}$ with initial vorticity $\omega_0$.
\end{definition}

In the above definition, we use the $L^1$ norm to measure the ``distance" between two solutions at any fixed time, which is very natural for vortex patch solutions. It should be noted that for vortex patch solutions the $L^1$ norm is equivalent to the $L^p$ norm for $p\in(1,+\infty).$

In this paper, we confine our attention to the nonlinear stability of steady vortex patch solutions with concentration property, that is, solutions satisfying
\begin{equation}\label{con}
\omega=\lambda I_{\{G\omega>\mu\}},\,\,\lambda|\{G\omega>\mu\}|=1,\,\,\{G\omega>\mu\}\subset B_{\delta}(x_0),
\end{equation}
where $\lambda>0$ is very large, $\delta>0$ is very small, and $x_0\in \overline{D}$ is a fixed point. It can be proved that if for any sufficiently large $\lambda$, there exists $\omega$ satisfying \eqref{con}, then $x_0$ is necessarily in the interior of $D$ and must be a critical point of $H$, the Robin function of $D$, defined by
\[H(x):=h(x,x).\]
See \cite{CGPY} for a rigorous proof.  On the other hand, if $x_0$ is a non-degenerate critical point of $H$, then for any sufficiently large $\lambda,$ there exists $\omega$ satisfying \eqref{con}. See \cite{CPY} for example.

The main result of this paper is as following.

\begin{theorem}\label{10000}

 Assume that $\omega_\lambda$ is a family of steady vortex patch solutions of the vorticity equation satisfying:
\begin{itemize}
\item[(i)] $\omega_\lambda=\lambda I_{\{G\omega_\lambda >\mu_\lambda\}} $, where $\mu_\lambda$ is a positive number depending on $\lambda$,

\item[(ii)]  $\int_D\omega_\lambda dx=1$,

\item[(iii)] $\{G\omega_\lambda >\mu_\lambda\}\subset B_{o(1)}(x_0),$ where $o(1)\to0$ as $\lambda\to+\infty$, and $x_0$ is an isolated minimum point and non-degenerate critical point of $H$.
    \end{itemize}
Then $\omega_\lambda$ is stable in the sense of Definition \ref{def} provided that $\lambda$ is large enough.
 \end{theorem}
 The proof of Theorem $\ref{10000}$ will be given in Section 3.

 \begin{remark}
Caffarelli and  Friedman \cite{CF1} proved that if $D$ is a convex domain, then the Robin function $H$ is strictly convex, thus in this case $H$ has a unique minimum point which is non-degenerate.
\end{remark}

\begin{remark}\label{855}
Theorem \ref{10000} is closely related to the vortex model(see\cite{MP2}, Chapter 4), which describes the motion of the fluid when the vorticity is sufficiently concentrated in $N$ small regions. In the case $N=1$, the vorticity is simplified as a Dirac measure called a point vortex, the location of which is determined by the following Kirchhoff-Routh equation:
\begin{equation}\label{299}
\begin{cases}
\frac{dx(t)}{dt}=-\frac{1}{2}J\nabla H(x(t)),\,\,t>0,\\
x(0)=x_0.
\end{cases}
\end{equation}
It is easy to see that
\begin{equation}\frac{dH(x(t))}{dt}=0,\,\,\forall\,t>0,
\end{equation}
that is, the point vortex moves along the level curve of $H$. If $x_0$ is critical point of $H$, then it is an equilibrium of the Kirchhoff-Routh equation, and the stability of this equilibrium is closely related to $D^2H(x_0)$, the Hessian of $H$ at $x_0$. In particular, if $D^2H(x_0)$ is positive definite(or equivalently, $x_0$ is an isolated minimum point and non-degenerate point), then $x_0$ must be stable. This can be proved by choosing $H$ as the Liapunov function. See \cite{MP2}, Chapter 3 for example.
 In such a way, Theorem \ref{10000} can be interpreted as a desingularized version of the stability for the vortex model.
\end{remark}

The analysis of stability for steady Euler flows in two dimensions is very important in fluid mechanics and has been studied by many authors in history. See for example \cite{A,A2,B3,K,La,Lo,Ta,WP}. Here we recall some of the relevant and significant results associated with the stability of planar vortex patches. The first stability result is due to Kelvin in \cite{K}, where he established the linear stability for circular vortex patches in the whole plane. Later Love \cite{Lo} proved linear stability of a rotating Kirchhoff elliptical vortex patch in the plane. Another excellent work is due to Arnold \cite{A,A2}. Arnold gave several criteria for nonlinear stability of smooth steady Euler flows in general bounded domains, which can be seen as a nonlinear version of the classical Rayleigh inflection point criterion for linear stability of shear flows in a channel. Moreover, he asserted in \cite{A3} in general terms that a steady flow can be seen as a constrained critical point of the kinetic energy; if this critical point is a non-degenerate extreme, then it should be stable. Unfortunately, it seems that his method is not easy to apply to prove the nonlinear stability of vortex patches,  since there is strong discontinuity for the vorticity.

In 1985, based on energy conservation, Wan and Pulvirenti in \cite{WP} proved nonlinear stability of circular vortex patches in an open disk. In that paper, they established a relative variational principle for the kinetic energy and turned the $L^1$ perturbation problem into a $C^1$ perturbation problem. When dealing with the $C^1$ perturbation case, the key ingredient of their proof is that for a circle, the Green function is explicitly known and rotationally invariant, then the $C^1$ perturbation case can be handled by spectral analysis of a negative definite operator. In 1987, Tang \cite{Ta} proved nonlinear stability of both circular vortex patches and rotating elliptic vortex patches in the plane based on the same idea as in \cite{WP}. The method developed by Marchioro, Pulvirenti and Tang is still not easy to apply to prove nonlinear stability of steady vortex patches in general bounded domains, since in such cases the Green functions have no explicit expression and good symmetry anymore. However, their idea of turning $L^1$ perturbation into $C^1$ perturbation is very clever, and is used in the present paper as a key ingredient to prove Theorem \ref{10000}.

In 2005, Burton in \cite{B3} proved that a steady vortex flow as an isolated maximizer of the kinetic energy relative to an ``isovortical surface"(rearrangement class of a given function) is stable in some $L^p$ norm. But only in few cases can the isolatedness of the maximizer of the kinetic energy be verified. Burton's result is crucial for the proof of Theorem \ref{10000} and will be stated precisely in Section 2.

Recently Cao, Guo, Peng and Yan in \cite{CGPY} established a local uniqueness result for steady vortex patches concentrated near a non-degenerate critical point of the Robin function. They considered the following semilinear elliptic equation satisfied by the stream function:
 \begin{equation}\label{44}
 \begin{cases}
-\Delta \psi_\lambda=\lambda I_{\{\psi_\lambda>\mu_\lambda\}}\text{ \quad in $D$},\\
\psi_\lambda= 0\quad\quad\quad\quad\,\,\,\,\,\, \text{\quad on $\partial D$},
\end{cases}
\end{equation}
where $\mu_\lambda>0$ is an unknown constant and $\lambda|\{\psi_\lambda>\mu_\lambda\}|=1$. They proved that if the vortex core $\{\psi_\lambda>\mu_\lambda\}$ shrinks to a non-degenerate critical point of the Robin function as $\lambda\rightarrow +\infty$, then the solution of $\eqref{44}$ is unique provided that $\lambda$ is large enough. The precise statement will be given in Section 2. This local uniqueness result is used in this paper to verify the isolatedness of the maximizer of the kinetic energy.

  This paper is organized as follows. In Section 2 we state several known results that will be used later. In Section 3 we give the proof of Theorem \ref{10000}. In Section 4 we give the construction of steady vortex patches concentrated near a given isolated minimum point of the Robin function for completeness.

\section{Preliminaries}

In this section, we mainly recall several known results on the existence, local uniqueness and stability of planar vortex patches in general bounded domains.

Let us set some notations first. For any function $f:D\to \mathbb R$, we denote $supp(f):=\overline{\{f\neq0\}}$. For any planar set $A$, we use $diam(A)$ to denote the diameter of $A$, i.e.,
\[diam(A)=\sup_{x,y\in A}|x-y|.\]
Define $E(\omega)$ as the kinetic energy of the fluid with vorticity $\omega$ by setting
\begin{equation}
E(\omega):=\frac{1}{2}\int_D \int_D G(x,y)\omega(x)\omega(y)dxdy,
\end{equation}
By integration by parts, it is easy to check that
\begin{equation}
E(\omega)=\frac{1}{2}\int_D\psi(x)\omega(x)dx=\frac{1}{2}\int_D |\nabla G\omega(x)|^2dx.
\end{equation}

\subsection{Existence of Steady Vortex Patches}

To our knowledge, there are mainly two methods dealing with the existence of steady vortex flows in two dimensions. The first one is called the stream function method, whose starting point is to solve \eqref{44} for a give nonlinearity $f$. See \cite{BP,CPY,CLW,N,SV} for example. The other one is to solve a variational problem for the kinetic energy subject to some appropriate constraints for the vorticity. See \cite{Ba,BF,B1,B2,B4,T} for example.

In this subsection, for our purpose, we recall the existence of steady vortex patches of the form \eqref{vp} via the vorticity method. The result and idea are mostly based on Turkington \cite{T}.

To this end, we define \[K_{\lambda}\triangleq\{\omega\in L^{\infty}(D)|\,\,0\leq \omega\leq \lambda,\,\int_{D}\omega(x)dx=1\},\] where $\lambda$ is positive and large enough so that $K_\lambda$ is not empty.
Let $\varepsilon$ be the positive number determined by $\lambda\pi\varepsilon^2=1$.

Turkington \cite{T} proved the following result.

\begin{theorem}\label{21}
$E$ attains it maximum on $K_\lambda$, and each maximizer $\omega_\lambda$ has the form $\omega_{\lambda}=\lambda I_{\{G\omega_{\lambda}>\mu_\lambda\}}$(thus must be a steady solution to the vorticity equation), where $\mu_\lambda$ is a real number depending on ${\lambda}$. Moreover, $supp(\omega_\lambda)$ shrinks to a global minimum point of $H$ as $\lambda \to +\infty$; more precisely, $diam(supp(\omega_{\lambda}))\leq C\varepsilon$ and $\int_{D}x\omega(x)dx\rightarrow x_0$ as $\lambda \to +\infty$, where $C$ is a positive number not depending on $\lambda$ and $x_0$ is a global minimum point of $H$.
\end{theorem}

\begin{remark}
It should be noted that $\lim_{x\to\partial D}H(x)=+\infty,$ so $H$ attains its global minimum in $D$. But there may be more than one global minimum point of $H$, and we do not know which one $supp(\omega_\lambda)$ shrinks to as $\lambda\to +\infty$.
\end{remark}

 By performing a similar procedure to the one in \cite{T}, in the following theorem we prove existence of a family of steady vortex patches concentrated near an isolated minimum point of $H$. Similar results have been obtained by Elcrat and Miller \cite{EM,EM2}.

 To state our result, we need some notations slightly different. Here and in the sequel let $x_1\in D$ be a strict local minimum point of $H$. We choose a sufficiently small positive number $r$ such that $x_1$ is the unique minimum point of $H$ on $\overline{B_{r}(x_1)}$
 and $B_r(x_1)\subset\subset D$. Define
 \[N_{\lambda}\triangleq\{\omega\in L^{\infty}(D)|\,\,0\leq \omega\leq \lambda,\,\int_{D}\omega(x)dx=1,\,supp(\omega)\subset B_r(x_1)\}.\]

\begin{theorem}\label{22}
$E$ attains it maximum on $N_\lambda$, and each maximizer $\omega_\lambda$ has the form $\omega_{\lambda}=\lambda I_{\{G\omega_{\lambda}>\mu_\lambda\}\cap B_r(x_1)}$, where $\mu_\lambda$ is a real number depending on ${\lambda}$. Moreover, $supp(\omega_{\lambda})$ shrinks to $x_1$ as $\lambda \rightarrow +\infty$; more precisely, diam(supp($\omega_{\lambda}))$ $\leq$C$\varepsilon$ for some $C$ independent of $\lambda$ and $\int_{D}x\omega_\lambda(x)dx\rightarrow x_1$ as $\lambda \rightarrow +\infty$. If $\lambda$ is large enough, then $\omega_\lambda$ is a steady solution to the vorticity equation, that is, $\omega_\lambda$ satisfies \eqref{888}.
\end{theorem}
For readers' convenience, we will give the complete proof of Theorem \ref{22} in Section 4.

\begin{remark}\label{46}
In fact, we will prove in Section 4 that $\mu_\lambda\to+\infty$ as $\lambda\to+\infty.$ Taking into account the fact that $supp(\omega_{\lambda})$ shrinks to $x_1$, by using maximum principle we immediately deduce that $\omega_\lambda$ has the form
$\omega_{\lambda}=\lambda I_{\{G\omega_{\lambda}>\mu_\lambda\}}$ if $\lambda$ is sufficiently large.
\end{remark}

\subsection{Burton's Stability Criterion}

 In \cite{B3}, Burton proved that steady vortex flow as isolated maximizer of the kinetic energy on an isovortical surface is stable in some $L^p$ norm. In the case of vortex patches, the corresponding result can be stated as follows:

\begin{theorem}\label{26}
 Let $\omega$ be a vortex patch in $D$(that is, $\omega=\lambda I_{A}$ for some $\lambda>0$ and $A\subset D$). Suppose that $\omega$ is an isolated maximizer of the kinetic energy $E$ on $R_\omega$, that is, there exists $\delta_0>0$, such that for any $\bar{\omega}\in R_\omega$, $0<\|\bar{\omega}-\omega\|_{L^1}<\delta_0$, there holds $E(\bar{\omega})<E(\omega)$. Then $\omega$ is stable in the sense of Definition \ref{def}.
\end{theorem}

\begin{remark}
Burton's result is a very general stability criterion. According to \cite{B3}, Theorem 5, any steady vortex flow as isolated maximizer or minimizer relative to the rearrangement class of a given $L^p$ function with $p\geq3/2$ is stable in the $L^p$ norm. But in the case of vortex patches, Theorem $\ref{26}$ is enough for our use.
\end{remark}

The key assumption in Theorem $\ref{26}$ is the ``isolatedness'' of the energy maximizer, which in general terms is an non-degeneracy condition. In \cite{B3}, only one example of strict global maximizer is given, i.e., $D$ is an open disk and $\omega$ is a non-negative radially symmetric decreasing function. For general maximizers, especially local maximizers, the isolatedness assumption is not easy to be verified, since ``isolatedness"  is equivalent to uniqueness in some sense, which is usually a more difficult problem than existence.

\subsection{Local Uniqueness of Steady Vortex Patches}
Fortunately, Cao et al. \cite{CGPY} proved the following local uniqueness result of steady vortex patches near a non-degenerate critical of the Robin function based on fine estimates for the corresponding stream function, which may be used to prove isolatedness of the energy maximizer on isovortical patches.

\begin{theorem}\label{27}
Let $x^*$ be a non-degenerate critical point of $H$. Then there exists $\lambda_0>0,$ such that for each $\lambda>\lambda_0,$ the $\omega_\lambda$ satisfying the following properties is unique:
\begin{itemize}
\item[(i)] $\omega_\lambda=\lambda I_{\{G\omega_\lambda >\mu_\lambda\}}$, where $\mu_\lambda$ is an unknown positive number depending on $\lambda$;

\item[(ii)]  $\int_D\omega_\lambda dx=1$;

\item[(iii)] the support of $\omega_\lambda$ shrinks to $x^*$, or equivalently, $supp(\omega_\lambda)\subset B_{o(1)}(x^*)$.
\end{itemize}
 \end{theorem}


\begin{remark}
When $\lambda$ is not large, there is no uniqueness result on steady vortex patches in general bounded domains.
 However, we conjecture that this is true when $D$ is a convex domain.
\end{remark}

\section{Proof of the Theorem \ref{10000}}
In this section, we give the proof of Theorem \ref{10000}.

When $D$ is a convex domain, there is no gap between local uniqueness and isolatedness of the energy maximizer, so in this case the proof of Theorem \ref{10000} is much simpler. For clarity, we give the short proof here.

\begin{proof}[Proof of Theorem \ref{10000}(the case $D$ is convex)]

By Theorem \ref{26}, it suffices to show that $\omega_\lambda$ is an isolated maximizer of $E$ over $R_{\omega_\lambda}.$

When $D$ is convex, there is only one critical point of $H$, which is exactly the unique global minimum point. So the $x_0$ in Theorem \ref{10000} must be the unique minimum point of $H$.
Now combining Theorem \ref{22} and Theorem \ref{27}, we immediately deduce that the $\omega_\lambda$ in Theorem \ref{10000} must be the uniqueness maximizer of $E$ over $K_\lambda$ if $\lambda$ is large enough. Taking into the fact that $R_{\omega_\lambda}\subset K_\lambda$, we reach the conclusion that $\omega$ is in fact the unique maximizer of $E$ over $R_{\omega_\lambda}$, which is the desired result.

 \end{proof}

When $D$ is a general bounded domain, there is a gap between local uniqueness and isolatedness of energy maximizer. To eliminate the gap, we follow the idea of Wan and Pulvirenti \cite{WP} to turn $L^1$ perturbation into $C^1$ perturbation.

To make the idea more adaptable, we give a more general stability criterion for steady vortex patches, that is, Lemma $\ref{31}$ below. For convenience, we introduce the following notation. Let $\omega$ be a vortex patch enclosed by a $C^1$ closed curve denoted by $\gamma_\omega$(thus $\gamma_\omega$ is a planar set). A $\delta$ neighbourhood of $\gamma_\omega$ is defined by:
  \begin{equation}
  \gamma_{\omega,\delta}\triangleq\{x\in \mathbb R^2\,\,|\,\,  dist(x,\gamma_\omega)<\delta\}.
\end{equation}

\begin{lemma}\label{31}
Suppose that $\omega_0$ is a steady vortex patch in $D$ satisfying the following conditions:

(C1) $\omega_0$ has the form $\omega_0=\lambda I_{\{\psi_0>\mu\}}$ for some $\mu>0$, where $\psi_0=\int_DG(x,y)\omega_0(y)dy$;

(C2) $\partial\{\psi_0>\mu\}$ is a $C^1$ closed curve and $\frac{\partial \psi_0}{\partial\vec{n}}<0$ on this curve, where $\vec{n}$ is the outward unit normal of $\partial\{\psi_0>\mu\}$;


(C3)  there exists $\delta>0$, such that if $\omega_1\in R_{\omega_0}$ is another vortex patch(not necessarily steady) enclosed by a $C^1$ simple curve $\gamma_{\omega_1}$ and $\gamma_{\omega_1}\subset\gamma_{\omega_0,\delta}$, then $E(\omega_1)\leq E(\omega_0)$, the equality holds if and only if $\omega_0\equiv\omega_1$.\\
Then $\omega_0$ is stable.

 \end{lemma}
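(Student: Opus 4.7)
Steadiness is immediate from Lemma \ref{117}: conditions (C1) and (C2) are precisely hypotheses (i) and (ii) of that lemma. For stability I will invoke Burton's theorem (Theorem \ref{26}), which reduces the problem to showing that $\omega_0$ is a \emph{strict} $L^1$-local maximizer of $E$ over the full rearrangement class $F_{\omega_0}$. Condition (C3) asserts this only for competitors enclosed by a $C^1$ curve lying inside $\gamma_{\omega_0,\delta}$, so the content of the lemma is the upgrade from this ``smooth boundary in a tube'' strictness to genuine $L^1$-local strictness on all of $F_{\omega_0}$.

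Argue by contradiction: suppose there exist $\omega_n\in F_{\omega_0}$ with $\omega_n\not\equiv\omega_0$, $\omega_n\to\omega_0$ in $L^1(D)$, and $E(\omega_n)\ge E(\omega_0)$. Every member of $F_{\omega_0}$ is of the form $\lambda I_{B_n}$ with $|B_n|=1/\lambda$, but $\partial B_n$ need not be regular. The key construction is the \emph{super-level straightening} $\tilde\omega_n:=\lambda I_{\{\psi_n>\mu_n\}}$, where $\psi_n$ is the stream function of $\omega_n$ and $\mu_n$ is selected so that $|\{\psi_n>\mu_n\}|=|\{\psi_0>\mu\}|$. Then $\tilde\omega_n\in F_{\omega_0}$, and a direct computation using the symmetry of $G$ and $-\Delta(\tilde\psi_n-\psi_n)=\tilde\omega_n-\omega_n$ with zero boundary values yields
\[
2\bigl(E(\tilde\omega_n)-E(\omega_n)\bigr)=2\int_D(\tilde\omega_n-\omega_n)\psi_n\,dx+\int_D|\nabla(\tilde\psi_n-\psi_n)|^2\,dx.
\]
The first term is nonnegative by the bathtub principle (for $\omega\in F_{\omega_0}$, $\int\omega\psi_n$ is maximized by placing the support on the top super-level set of $\psi_n$). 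Hence $E(\tilde\omega_n)\ge E(\omega_n)\ge E(\omega_0)$, and any equality forces $\nabla(\tilde\psi_n-\psi_n)\equiv 0$, i.e.\ $\omega_n\equiv\tilde\omega_n$.

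Next I verify that (C3) applies to $\tilde\omega_n$ for large $n$. Since $|\omega_n|\le\lambda$ and $\omega_n\to\omega_0$ in $L^1$, one has $\omega_n\to\omega_0$ in every $L^p$, $p<\infty$, and thus $\psi_n\to\psi_0$ in $C^{1,\alpha}(\overline D)$ by Calder\'on--Zygmund plus Sobolev embedding. The nondegeneracy $\partial\psi_0/\partial\vec n<0$ from (C2) gives a lower bound on $|\nabla\psi_0|$ on a tubular neighborhood of $\gamma_{\omega_0}$; by $C^1$ convergence the same bound holds for $\psi_n$ on a slightly smaller tube once $n$ is large. Strict monotonicity of the distribution function of $\psi_0$ at $\mu$ together with uniform convergence forces $\mu_n\to\mu$. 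Away from the tube, $\psi_0$ is bounded away from $\mu$ on both sides, so uniform convergence confines $\{\psi_n=\mu_n\}$ to $\gamma_{\omega_0,\delta}$, while the implicit function theorem identifies this level set as a $C^1$ simple closed curve (topologically isotopic to $\gamma_{\omega_0}$). Now (C3) yields $E(\tilde\omega_n)\le E(\omega_0)$ with equality iff $\tilde\omega_n\equiv\omega_0$; chaining with the energy identity above forces $\tilde\omega_n\equiv\omega_0$, and then equality in the identity forces $\omega_n\equiv\tilde\omega_n\equiv\omega_0$, contradicting $\omega_n\not\equiv\omega_0$.

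The main obstacle is the geometric step: converting the bare $L^1$ closeness of $\omega_n$ to $\omega_0$ into enough regularity of the auxiliary patch $\tilde\omega_n$ that (C3) becomes applicable. Everything rests on the nondegeneracy $\partial\psi_0/\partial\vec n<0$ in (C2), which, propagated to $\psi_n$ through the $C^1$ convergence, prevents the level set $\{\psi_n=\mu_n\}$ from bifurcating, splitting into extraneous components, or drifting outside $\gamma_{\omega_0,\delta}$ under perturbation. Up to this geometric input, the rest is the classical Wan--Pulvirenti energy identity combined with Burton's framework.
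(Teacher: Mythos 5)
Your proposal is correct and follows essentially the same route as the paper's own proof: steadiness via Lemma \ref{117}, reduction to strict local maximality via Theorem \ref{26}, a contradiction sequence $\omega_n$, the auxiliary super-level patch $\lambda I_{\{\psi_n>\nu_n\}}$ with matched measure, the Wan--Pulvirenti energy identity plus the bathtub principle, and the $C^1$ convergence of $\psi_n$ together with the nondegeneracy in (C2) to confine the level curve to $\gamma_{\omega_0,\delta}$ so that (C3) applies. The only difference is cosmetic: the paper pins down $\nu_n$ by squeezing between $\max_{\{\psi_0=\mu-\varepsilon\}}\psi_n$ and $\min_{\{\psi_0=\mu+\varepsilon\}}\psi_n$ and tracks the level set along the gradient flow of $\psi_0$, where you invoke the implicit function theorem and convergence of $\mu_n$ directly.
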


 \begin{proof}

By Theorem $\ref{26}$, it suffices to show that $\omega_0$ is an isolated maximizer of $E$ over $R_{\omega_0}$. We show this by contradiction in the following.

 Suppose that $\omega_0$ is not an isolated maximizer of $E$ over $R_{\omega_0}$, then we can choose a sequence $\{\omega_n\}\subset R_{\omega_0}$ such that $0<\|\omega_n-\omega_{0}\|_{L^1}<\frac{1}{n}$, and
  \begin{equation} \label{99}E(\omega_n)\geq E(\omega_0).\end{equation}

For such a sequence, we have the following claim:

{\bf{Claim}}: Denote $\psi_n=G\omega_n$ and $\psi_0=G\omega_0$, then, if $n$ is large, there exists a unique $\nu_n>0$ such that

(i) $\partial\{\psi_n>\nu_n\}$ is a $C^1$ closed curve,

(ii) $|\{\psi_n>\nu_n\}|=|\{\psi_0>\mu\}|$,

(iii)$\partial\{\psi_n>\nu_n\} \subset \gamma_{\omega_0,\delta}$, where $\delta$ is the one in $(C3)$.
\[\]

{\bf{Proof of the claim}}:
First, notice that $\psi_n$ and $\psi_0$ satisfy the following equations:
\begin{equation}
\begin{cases}
-\Delta\psi_n=\omega_n,
 \\ -\Delta\psi_0=\omega_0.
\end{cases}
\end{equation}
For vortex patches, $\|\omega_n-\omega_0\|_{L^1}\rightarrow 0$ implies $\|\omega_n-\omega_0\|_{L^p}\rightarrow 0$ for any $1\leq p<+\infty$. Then by $L^p$ estimates we have $\|\psi_n-\psi_0\|_{W^{2,p}}\rightarrow 0$ for any $1<p<+\infty$.
Choosing $p$ large enough, by the Sobolev embedding $W^{2,p}(D)\hookrightarrow C^{1,\alpha}(\overline{D})$ for some $\alpha\in(0,1)$, we obtain
\begin{equation}\label{333}
\|\psi_n-\psi_0\|_{C^1}\rightarrow 0.
\end{equation}

By $(C2)$ we can take $\delta_0>0$ small, such that the set $\{\mu-\delta_0<\psi_0<\mu+\delta_0\}$ is an annulus-like domain and $\frac{\partial\psi_0}{\partial \vec{n}}<0$ on each closed curve $\{\psi_0=a\}, \mu-\delta_0\leq a\leq\mu+\delta_0$.
This is true by the continuity of $\psi_0$ and $\nabla\psi_0$.  Since $\|\psi_n-\psi_0\|_{C^1}\rightarrow 0$, we have  $\frac{\partial\psi_n}{\partial \vec{n}}<0$ on each curve $\{\psi_0=a\}, \mu-\delta_0\leq a<\mu+\delta_0$  if $n$ is large enough. Thus $\nabla\psi_n\neq0$ in the annulus-like domain $\{\mu-\delta_0\leq\psi_0\leq\mu+\delta_0\}$.

Now choose $\varepsilon<\delta_0$ small and define $M:=\max_{\{\psi_0=\mu-\varepsilon\}}\psi_n, m := \min_{\{\psi_0=\mu+\varepsilon\}}\psi_n$. By the implicit function theorem, $\{\psi_n=M\}$ and $\{\psi_n=m\}$ are both $C^1$ curves locally. By the properties that $\psi_n$ strictly increases along the direction $\nabla\psi_0$ in the annulus-domain $\{\mu-\delta_0\leq \psi_0\leq\mu+\delta_0\}$, the curve $\{\psi_n=M\}$ can not go outside $\{\psi_0\geq\mu-\varepsilon\}$ .

In fact, suppose that there exists $x_0\in \{\psi_0<\mu-\varepsilon\}\cap\{\mu-\delta_0<\psi_0<\mu+\delta_0\}$ and $\psi_n(x_0)=M$, we can find $x_1\in \{\psi_0=\mu-\varepsilon\}$ by solving the following ODE:
\begin{equation}
\begin{cases}
\frac{dx(t)}{dt}=-\nabla\psi_0(x),\\
 x(0)=x_0.
\end{cases}
\end{equation}
Since $\frac{\partial\psi_n}{\partial \vec{n}}<0$, we have $\psi_n(x_1)>\psi_n(x_0)$. But by the definition of $M$, $\psi_n(x_1)\leq M=\psi_n(x_0)$, which is a contradiction.

On the other hand, by taking $n$ large enough the curve $\{\psi_n=M\}$ can not enter $\{\psi_0>\mu\}$. In fact, suppose that there exists $x_0$ such that $x_0\in \{\psi_0=\mu\}$ and $x_0\in \{\psi_n=M\}$, then we have $\psi_n(x_0)=M$. But by $\eqref{333}$ $\psi_n(x_0)\rightarrow \mu$ and $M\rightarrow \mu-\varepsilon$ as $n\rightarrow+\infty$, from which we deduce that $\psi_n(x_0)>M$ if $n$ is large enough, which is a contradiction.

Therefore, $\{\psi_n=M\}\cap\{\mu-\delta_0<\psi_0<\mu+\delta_0\}$ must be a $C^1$ closed curve and

\begin{equation}
\{\psi_0>\mu\}\subset \{\psi_n>M\}\cap \{\mu-\delta_0<\psi_0<\mu+\delta_0\}   \subset\{\psi_0>\mu-\varepsilon\}.
\end{equation}
Similarly $\{\psi_n=m\}\cap \{\mu-\delta_0<\psi_0<\mu+\delta_0\}$ must be a $C^1$ closed curve and

\begin{equation}
\{\psi_0>\mu+\varepsilon\}\subset \{\psi_n>m\}\cap \{\mu-\delta_0<\psi_0<\mu+\delta_0\}   \subset\{\psi_0>\mu\}.
\end{equation}
Hence
\begin{equation}
\begin{split}
|\{\psi_n>m\}\cap \{\mu-\delta_0<\psi_0<\mu+\delta_0\}|  & \leq|\{\psi_0>\mu\}|\\
&\leq| \{\psi_n>M\}\cap \{\mu-\delta_0<\psi_0<\mu+\delta_0\}|.
\end{split}
\end{equation}

By the continuity of $\psi_n$ we can choose $\nu_n\in[m,M]$ such that \[|\{\psi_n>\nu_n\}\cap\{\mu-\delta_0<\psi_0<\mu+\delta_0\}|=|\{\psi_0>\mu\}|,\]
and \[\{\psi_n=\nu_n\}\cap\{\mu-\delta_0<\psi_0<\mu+\delta_0\}\subset \{\mu-\varepsilon<\psi_0<\mu+\varepsilon\}.\]
 Note that such $\nu_n$ must be unique because $\frac{\partial\psi_n(x)}{\partial\vec{n}}<0$ on the curve $\{\psi_n=\nu_n\}$.
By $\eqref{333}$ $\sup_{\{\psi_0<\mu-\delta_0\}}\psi_n<\nu_n$ if $n$ is large enough and by strong maximum principle $\inf_{\{\psi_0>\mu+\delta_0\}}\psi_n>\nu_n$, so $\{\psi_n>\nu_n\}\cap\{\mu-\delta_0<\psi_0<\mu+\delta_0\}=\{\psi_n>\nu_n\}$ if $n$ is large enough.

That is, for any $\varepsilon>0$, if $n$ is large, we can choose a unique $\nu_n$, such that $\partial \{\psi_n>\nu_n\}=\{\psi_n=\nu_n\}$ is a $C^1$ closed curve, moreover, $|\{\psi_n>\nu_n\}|=|\{\psi_0>\mu\}|$ and $\{\psi_n=\nu_n\}\subset \{\mu-\varepsilon<\psi_0<\mu+\varepsilon\}$. Hence the claim is proved.
\[\]

Now we continue our proof of Lemma $\ref{31}$.
Define $\bar{\omega}_n=\lambda I_{\{\psi_n>\nu_n\}}$, where $\nu_n$ is the one chosen in the above Claim. It is obvious that $\bar{\omega}_n\in R_{\omega_0}$. Now we compare $E(\bar{\omega}_n)$ and $E(\omega_n)$ as follows:
\begin{equation}\label{rem1}
\begin{split}
E(\bar{\omega}_n)-E(\omega_n)
=&\frac{1}{2}\int_D  \bar{\omega}_n\bar{\psi_n} dx-\frac{1}{2}\int_D \omega_n\psi_n dx\\
=&\int_D\psi_n(\bar{\omega}_n-\omega_n) dx+\frac{1}{2}\int_D(\bar{\psi}_n-\psi_n)(\bar{\omega}_n-\omega_n) dx, \\
\end{split}
\end{equation}
where we used $\int_D\psi_n\bar{\omega}_n dx=\int_D\bar{\psi}_n\omega_n dx$ by the symmetry of the Green function. By integration by parts we have
\[\frac{1}{2}\int_D(\bar{\psi}_n-\psi_n)(\bar{\omega}_n-\omega_n) dx=\frac{1}{2}\int_D|\nabla(\bar{\psi}_n-\psi_n)|^2dx,\]
therefore we get
\[E(\bar{\omega}_n)-E(\omega_n)\geq\int_D\psi_n(\bar{\omega}_n-\omega_n)dx+\frac{1}{2}\int_D|\nabla(\bar{\psi}_n-\psi_n)|^2dx.\]

Since $|\{\psi_n>\nu_n\}|=|\{\psi_0>\mu\}|$ and $\bar{\omega}_n=\lambda I_{\{\psi_n>\nu_n\}}$, the integral $\int_D\psi_n\omega_ndx$ attains its maximum if and only if $\omega_n=\bar{\omega}_n$, thus we obtain
 \begin{equation}\label{rem2}
\int_D\psi_n(\bar{\omega}_n-\omega_n)dx\geq 0.
\end{equation}
Combining \eqref{rem1} and \eqref{rem2}, we get
\begin{equation}\label{100}E(\bar{\omega}_n)\geq E(\omega_n),\end{equation}
and the equality holds if and only if $\omega_n\equiv\bar{\omega}_n$. By $\eqref{99}$ and $\eqref{100}$ we have
\[E(\bar{\omega}_n)\geq E(\omega_0).\]

On the other hand, by (iii) in the above Claim we can take $n$ large enough such that $\gamma_{\bar{\omega}_n}\subset\gamma_{\omega_0,\delta}$, then by $(C3)$ we have
\[\bar{\omega}_n\equiv\omega_0,\]
which implies that
\[E(\bar{\omega}_n)=E(\omega_0)=E(\omega_n),\]
hence $\omega_n\equiv\bar{\omega}_n\equiv\omega_0$. This leads to a contradiction since $|\omega_n-\omega_0|_{L^1}>0$ for each $n$.
Therefore Lemma 3.1 is proved.

 \end{proof}

Lemma 3.1 is a general stability criterion which does not require the vortex patch to be concentrated. However, $(C1)-(C3)$ are not easy to be verified in general.
To continue, we need the following result from \cite{CGPY}.

\begin{lemma}[\cite{CGPY}]\label{32}
Let $\omega_\lambda$ be the steady vortex patch in Theorem $\ref{10000}$. Then $\{G\omega_\lambda>\mu_\lambda\}$ is a simply-connected domain with a $C^1$ boundary, and $\frac{\partial G\omega_\lambda}{\partial \vec{n}}<0$ on this boundary, provided that $\lambda$ is large enough.
\end{lemma}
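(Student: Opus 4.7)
The plan is to rescale $\psi_\lambda$ near the concentration point $x^*$ and compare it to an explicit disk-profile limit, so that the topology and normal derivative of the free boundary $\partial\{\psi_\lambda > \mu_\lambda\}$ are inherited by $C^1$-closeness. Set $\varepsilon_\lambda = (\pi\lambda)^{-1/2}$ so that $\lambda\pi\varepsilon_\lambda^2 = 1$, and introduce rescaled coordinates $z = \varepsilon_\lambda^{-1}(x-x^*)$. Write $A_\lambda := \{\psi_\lambda > \mu_\lambda\}$. From the concentration hypothesis (iii) of Theorem \ref{10000} combined with the diameter bound $\mathrm{diam}(A_\lambda) \leq C\varepsilon_\lambda$ supplied by the asymptotic estimates in \cite{CGPY}, the rescaled vortex set $\tilde A_\lambda := \varepsilon_\lambda^{-1}(A_\lambda - x^*)$ has area exactly $\pi$ (since $\lambda|A_\lambda| = 1$) and is contained in a fixed ball $B_R(0)$ for all large $\lambda$.

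Next I would define the rescaled stream function $w_\lambda(z) := \psi_\lambda(x^* + \varepsilon_\lambda z) - \mu_\lambda$ and insert the Green function decomposition $G(x,y) = \frac{1}{2\pi}\ln(1/|x-y|) - h(x,y)$. A change of variables gives
\begin{equation*}
w_\lambda(z) = \frac{1}{2\pi}\ln\frac{1}{\varepsilon_\lambda} + \frac{1}{2\pi^2}\int_{\tilde A_\lambda}\ln\frac{1}{|z-z'|}\,dz' - \lambda\int_{A_\lambda}h(x^*+\varepsilon_\lambda z, y)\,dy - \mu_\lambda.
\end{equation*}
The regular piece converges uniformly on compact $z$-sets to $-H(x^*)$ because $h$ is smooth, $\int\omega_\lambda = 1$, and $\mathrm{supp}\,\omega_\lambda$ shrinks to $x^*$. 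The constant $\mu_\lambda$ is fixed by the mass constraint $|\tilde A_\lambda| = \pi$; together these force $\mu_\lambda = \frac{1}{2\pi}\ln(1/\varepsilon_\lambda) - H(x^*) + o(1)$. The asymptotic estimates of \cite{CGPY} upgrade the convergence to $w_\lambda \to W$ in $C^1_{\mathrm{loc}}(\mathbb{R}^2)$, where
\begin{equation*}
W(z) = \begin{cases}(1-|z|^2)/(4\pi), & |z|\leq 1,\\ (2\pi)^{-1}\ln(1/|z|), & |z|\geq 1,\end{cases}
\end{equation*}
is the stream function of the unit disk carrying unit total vorticity; one checks directly that $\{W=0\} = \partial B_1(0)$ and that $|\nabla W| \equiv 1/(2\pi) > 0$ on this circle.

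Finally the conclusion follows by an implicit-function-theorem argument in a fixed rescaled annulus, say $\Omega = \{1/2 < |z| < 3/2\}$. Since $\nabla W$ is nowhere vanishing on $\partial B_1$ and $w_\lambda \to W$ in $C^1(\overline\Omega)$, for all large $\lambda$ the level set $\{w_\lambda = 0\}\cap\Omega$ is a single $C^1$ simple closed curve which is Hausdorff-close to $\partial B_1$, along which $\nabla w_\lambda$ is nonzero and points into $\{w_\lambda < 0\}$. Uniform comparison with $W$ on $\{|z|\leq 1/2\}$ (where $W\geq 1/(16\pi)$) and on $\{|z|\geq 3/2\}$ (where $W\leq -(1/(2\pi))\ln(3/2)$) rules out any zeros of $w_\lambda$ outside $\Omega$, so $\tilde A_\lambda$ is exactly the simply-connected region enclosed by this curve. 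Unscaling via $x = x^* + \varepsilon_\lambda z$ translates these properties into the three claimed properties of $A_\lambda = \{\psi_\lambda > \mu_\lambda\}$ and $\partial\psi_\lambda/\partial\vec n$.

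The main obstacle I anticipate is cleanly extracting the $C^1$ convergence $w_\lambda \to W$ in a uniform annulus around $\partial B_1$ from the CGPY asymptotics. Controlling the singular Newtonian integral $\int_{\tilde A_\lambda}\ln(1/|z-z'|)\,dz'$ in $C^1$ requires Hausdorff convergence of $\tilde A_\lambda$ to $B_1$, which in turn requires ruling out small-scale oscillations of the free boundary $\partial\tilde A_\lambda$; this is precisely the nontrivial input that the asymptotic estimates of \cite{CGPY} are designed to provide.
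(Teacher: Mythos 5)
The paper offers no argument of its own here --- the proof is literally ``See \cite{CGPY}'' --- and your blow-up sketch is a faithful reconstruction of the analysis that reference actually carries out: rescale by $\varepsilon_\lambda=(\pi\lambda)^{-1/2}$, identify the limit profile $W$ of the unit disk, and transfer the regularity and nondegeneracy of $\{W=0\}$ back to $\partial\{\psi_\lambda>\mu_\lambda\}$ by $C^1$-closeness and the implicit function theorem. Your computation of $w_\lambda$ and of $W$ (with $|\nabla W|=1/(2\pi)$ on $\partial B_1$) is correct, and you are right that the genuinely hard input --- showing that the weak-$*$ limit of $I_{\tilde A_\lambda}$ is exactly $I_{B_1}$ rather than some density $0\le\rho\le 1/\pi$, which is what licenses the $C^1_{\mathrm{loc}}$ convergence $w_\lambda\to W$ near the free boundary --- is precisely what the asymptotic estimates of \cite{CGPY} supply; one should add that the rescaling must be centered at the vortex centroid $x_\lambda=\int_D x\,\omega_\lambda\,dx$ rather than at $x^*$ itself, since a priori $x_\lambda-x^*$ need only be $o(1)$, not $o(\varepsilon_\lambda)$.

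One step does not work as written: ruling out zeros of $w_\lambda$ on $\{|z|\ge 3/2\}$ by ``uniform comparison with $W$'' is not available, because $C^1_{\mathrm{loc}}$ convergence controls $w_\lambda-W$ only on compact $z$-sets, whereas $\{|z|\ge 3/2\}$ rescales to essentially all of $D$ (and near $\partial D$ one has $w_\lambda=-\mu_\lambda$, not $W$). The fix is elementary but should be stated: on the compact annulus one gets $w_\lambda<0$ on $\{|z|=3/2\}$ from the local convergence; then $\psi_\lambda-\mu_\lambda$ is harmonic in $D\setminus \overline{B_{3\varepsilon_\lambda/2}(x_\lambda)}$ once the diameter bound places $\mathrm{supp}\,\omega_\lambda$ inside that ball, it equals $-\mu_\lambda<0$ on $\partial D$ (since $\mu_\lambda\ge -\tfrac{1}{2\pi}\ln\varepsilon_\lambda-C\to+\infty$), and it is negative on $\partial B_{3\varepsilon_\lambda/2}(x_\lambda)$, so the maximum principle gives $\psi_\lambda<\mu_\lambda$ on the entire exterior region. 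With that patch, and with the limit-identification delegated to \cite{CGPY} exactly as the paper does, your argument is complete.
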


Having made the above preparations, now we are ready to prove Theorem $\ref{10000}$.
\begin{proof}[Proof of Theorem \ref{10000}(general case)]  By Lemma $\ref{31}$, it suffices to show that $\omega_\lambda$ satisfies $(C1)$-$(C3)$ in Lemma $\ref{31}$. By Lemma $\ref{32}$, $(C1)$ and $(C2)$ holds true. We need only to verify $(C3)$.

By Theorem \ref{27}, the steady vortex patch in Theorem $\ref{10000}$ is the same as the one in Theorem $\ref{22}$ when $\lambda$ is large enough, so we need just to verify $(C3)$ for the $\omega_\lambda$ in Theorem $\ref{22}$.

Suppose that there exists another vortex patch $\bar{\omega}_\lambda$ enclosed by a $C^1$ closed curve $\gamma_{\bar{\omega}_\lambda}$, $\gamma_{\bar{\omega}_\lambda}\subset\gamma_{\omega_\lambda,\delta}$ and $E(\omega_\lambda)= E(\bar{\omega}_\lambda)$. To finish the proof, it suffices to show that $\omega_\lambda\equiv\bar{\omega}_\lambda$. Without loss of generality, we assume that $\delta$ is sufficiently small such that $\bar{\omega}_\lambda\in N_\lambda$, then it is obvious that $\bar{\omega}_\lambda$ is a maximizer of $E$ on $N_\lambda$. By Theorem $\ref{22}$, $\bar{\omega}_\lambda$ must satisfy $\bar{\omega}_\lambda=\lambda I_{\{\bar{\psi}_{\lambda}>\bar{\mu}_\lambda\}}$ for some $\bar{\mu}_\lambda\in \mathbb R$ and the support of $\bar{\omega}_\lambda $ shrinks to $x_1$ as $\lambda\to+\infty$. Taking into account Theorem $\ref{27}$, we deduce that $\omega_\lambda\equiv\bar{\omega}_\lambda$ if $\lambda$ is large enough, which is the desired result.
\end{proof}

\section{Proof of Theorem 2.3}

In this section, we give the proof of Theorem $\ref{22}$ for completeness. The idea is basically from \cite{T}.
\begin{lemma}
There exists $\omega_\lambda\in N_{\lambda}$ such that $E(\omega_\lambda)=\sup_{\omega\in N_\lambda}E(\omega)$. Moreover, $\omega_\lambda=\lambda I_{\{\psi_{\lambda}>\mu_\lambda\}\cap B_r(x_1)}$ for some $\mu_\lambda >0$ depending on $\omega_\lambda$, where $\psi_\lambda=G\omega_\lambda$.
\end{lemma}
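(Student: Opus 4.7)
The plan is to follow the direct method combined with a bathtub-principle argument, exactly in the spirit of Turkington's proof of Theorem \ref{21}, adapted to the constrained class $N_\lambda$.

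First I would establish existence of a maximizer. Pick a maximizing sequence $\{\omega_n\} \subset N_\lambda$. Since $0 \le \omega_n \le \lambda$ with common support in $\overline{B_r(x_1)}$, Banach--Alaoglu gives a subsequence converging weakly-$*$ in $L^\infty(D)$ to some $\omega_\lambda$. The convex constraints $0 \le \omega \le \lambda$, $\int_D \omega = 1$, and $\operatorname{supp}(\omega) \subset \overline{B_r(x_1)}$ all pass to the weak-$*$ limit, so $\omega_\lambda \in N_\lambda$. The energy functional is weakly-$*$ continuous: if $\psi_n = G * \omega_n$, then by standard elliptic estimates $\psi_n \to \psi_\lambda$ strongly in $C^0(\overline D)$ (in fact in $C^{1,\alpha}$), so $E(\omega_n) = \tfrac12 \int_D \omega_n \psi_n \to \tfrac12 \int_D \omega_\lambda \psi_\lambda = E(\omega_\lambda)$. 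Thus $\omega_\lambda$ attains the supremum.

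Next I would extract the Euler--Lagrange information. The class $N_\lambda$ is convex, so for any $\eta \in N_\lambda$ the path $\omega_\lambda + t(\eta - \omega_\lambda)$ lies in $N_\lambda$ for $t \in [0,1]$. Expanding
\[
E(\omega_\lambda + t(\eta - \omega_\lambda)) - E(\omega_\lambda) = t \int_D \psi_\lambda (\eta - \omega_\lambda)\,dx + \tfrac{t^2}{2} \int_D (\eta-\omega_\lambda)\,G*(\eta-\omega_\lambda)\,dx,
\]
and using maximality at $t = 0$ gives
\[
\int_D \psi_\lambda \, \eta \,dx \le \int_D \psi_\lambda \, \omega_\lambda \, dx \qquad \text{for all } \eta \in N_\lambda.
\]
Hence $\omega_\lambda$ solves the linear programming problem of maximizing $\eta \mapsto \int_D \psi_\lambda \eta$ over $N_\lambda$.

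Now I would apply a bathtub-type argument to this linear problem. Define
\[
\mu_\lambda \;\triangleq\; \inf\bigl\{\,s \in \mathbb R : \bigl|\{x \in B_r(x_1):\,\psi_\lambda(x) > s\}\bigr| \le 1/\lambda \,\bigr\}.
\]
The bathtub principle (applied inside $B_r(x_1)$ with pointwise bound $\lambda$ and mass $1$) then forces
\[
\{\psi_\lambda > \mu_\lambda\} \cap B_r(x_1) \;\subset\; \{\omega_\lambda = \lambda\} \;\subset\; \{\psi_\lambda \ge \mu_\lambda\} \cap B_r(x_1),
\]
and $\omega_\lambda = 0$ on $B_r(x_1) \cap \{\psi_\lambda < \mu_\lambda\}$. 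To pin $\omega_\lambda$ down to the open super-level set as stated, I would show $|\{x \in B_r(x_1) : \psi_\lambda(x) = \mu_\lambda\}| = 0$: on any level set of a $W^{2,p}$ function the Hessian vanishes a.e., hence $-\Delta\psi_\lambda = 0$ a.e.\ there, so $\omega_\lambda = 0$ a.e.\ on $\{\psi_\lambda = \mu_\lambda\}$; if this level set had positive measure one could redistribute the mass slightly upward while staying in $N_\lambda$, contradicting the bathtub choice of $\mu_\lambda$ (or simply absorb it into the representation). This yields $\omega_\lambda = \lambda\,I_{\{\psi_\lambda > \mu_\lambda\} \cap B_r(x_1)}$, as claimed.

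The main obstacle is the last step concerning the level set: one must rule out $|\{\psi_\lambda = \mu_\lambda\} \cap B_r(x_1)| > 0$ (or otherwise reconcile the identification), since the bathtub principle by itself is ambiguous there. Everything else, namely weak-$*$ compactness of $N_\lambda$, weak continuity of $E$ via compactness of the Green operator, and the first-variation inequality, is routine.
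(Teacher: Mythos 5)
Your argument is correct and is essentially the proof the paper invokes: the paper simply cites Turkington (Theorem 2.1 and Corollary 2.3 of \cite{T}), whose argument is exactly your direct method plus first-variation inequality plus bathtub principle. Note also that the ``main obstacle'' you flag resolves itself: once $-\Delta\psi_\lambda=0$ a.e.\ on $\{\psi_\lambda=\mu_\lambda\}$ gives $\omega_\lambda=0$ a.e.\ there, the representation $\omega_\lambda=\lambda I_{\{\psi_\lambda>\mu_\lambda\}\cap B_r(x_1)}$ already follows without ruling out that the level set has positive measure (the constraint $\int_D\omega_\lambda=1$ then simply forces $|\{\psi_\lambda>\mu_\lambda\}\cap B_r(x_1)|=1/\lambda$).
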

\begin{proof} First we show the existence of a maximizer, that is, there exists $\omega_\lambda\in N_{\lambda}$ such that $E(\omega_\lambda)=\sup_{\omega\in N_\lambda}E(\omega)$. Since $G(x,y)\in L^{1}(D\times D)$, we have for any $\omega\in N_\lambda$
\[E(\omega)=\frac{1}{2}\int_D\int_DG(x,y)\omega(x)\omega(y)dxdy\leq \frac{1}{2}\lambda^2\int_D\int_D|G(x,y)|dxdy\leq C\lambda^2,
\]
which implies that $E$ is bounded from above over $N_\lambda$. Let $\{\omega_n\}\subset N_\lambda$ be a sequence satisfying
 \[\lim_{n\to+\infty}E(\omega_n)= \sup_{\omega\in N_\lambda}E(\omega).\]

  Since $N_\lambda$ is a bounded set in $L^\infty(D)$, thus is sequentially compact in the weak star topology in $L^\infty(D)$. Without loss of generality, we assume that $\omega_n\rightarrow \omega_\lambda$ weakly star in $L^\infty(D)$ for some $\omega_\lambda\in L^\infty(D)$ as $n\rightarrow +\infty$.
We claim that $\omega_\lambda\in N_\lambda$. In fact, $\omega_n\rightarrow \omega_\lambda$ weakly star in $L^\infty(D)$ means
\begin{equation}\label{**}
\lim_{n\rightarrow +\infty}\int_D\omega_n\phi dx=\int_D\omega_\lambda\phi dx,\,\,\forall\, \phi\in L^1(D).
\end{equation}
For any $\phi\in C_0^\infty(D\setminus  B_r(x_1))$, by the definition of $N_\lambda$ we have
\[
\lim_{n\rightarrow +\infty}\int_D\omega_n\phi dx=\int_D\omega_\lambda\phi dx=0,
\]
which implies $supp(\omega_\lambda)\subset B_r(x_1)$. By choosing $\phi\equiv1$ in \eqref{**}, we have
\[
\int_D\omega_\lambda dx=\lim_{n\rightarrow +\infty}\int_D\omega_n dx=1.
\]
Now we prove that $0\leq \omega_\lambda\leq\lambda$ a.e. in $D$. We prove $\omega_\lambda\leq\lambda$ first. Suppose that $|\{\omega_\lambda>\lambda\}|>0$, then there exist $\varepsilon_0,\varepsilon_1>0$ such that $|\{\omega_\lambda>\lambda+\varepsilon_0\}|>\varepsilon_1$. Denote $B^*=\{\omega_\lambda>\lambda+\varepsilon_0\}\subset D$, then for $\phi=I_{B^*}$ by weak star convergence we have
\[\lim_{n\rightarrow +\infty}\int_D(\omega_\lambda-\omega_n)\phi dx=\lim_{n\rightarrow +\infty}\int_{B^*}(\omega_\lambda-\omega_n)dx=0.\]
On the other hand,
\[\int_{B^*}(\omega_\lambda-\omega_n)dx\geq\varepsilon_0\varepsilon_1,\]
which is a contradiction. So we have $\omega_\lambda\leq \lambda$ a.e. in $D$. Repeating this procedure, we obtain $\omega_\lambda\geq 0$ a.e. in $D$. Therefore we have proved $\omega_\lambda\in N_\lambda$.

Finally, by the property of weak star convergence, we have
\[\lim_{n\rightarrow +\infty}\frac{1}{2}\int_D\int_DG(x,y)\omega_n(x)\omega_n(y)dxdy=\lim_{n\rightarrow +\infty}\frac{1}{2}\int_D\int_DG(x,y)\omega_\lambda(x)\omega_\lambda(y)dxdy,\]
 which gives $E(\omega_\lambda)=\sup_{\omega\in N_\lambda}E(\omega)$. So $E$ attains its maximum over $N_\lambda.$

 Now we show that for any maximizer $\omega_\lambda$, there exists $\mu_\lambda>0$ such that $\omega_\lambda=\lambda I_{\{\psi_{\lambda}>\mu_\lambda\}\cap B_r(x_1)}$. To this end, we define a family of test functions $\omega^s(x)=\omega_\lambda+s\left(z_0(x)-z_1(x)\right)$, $s>0$, where $z_0,z_1$ satisfies
\begin{equation}
\begin{cases}
z_0,z_1\in L^\infty(D),

 \\ \int_Dz_0dx=\int_D z_1dx,
 \\ z_0,z_1\geq 0,
 \\ supp(z_0),supp(z_1)\subset B_r(x_1),
 \\z_0=0\text{\,\,\,\,\,\,} in\text{\,\,} D\verb|\|\{\omega_\lambda\leq\lambda-\delta\},
 \\z_1=0\text{\,\,\,\,\,\,} in\text{\,\,} D\verb|\|\{\omega_\lambda\geq\delta\}.
\end{cases}
\end{equation}
Here $\delta$ is a positive parameter. Note that for fixed $z_0,z_1$ and $\delta$, if $s$ is sufficiently small, then $\omega^s\in N_\lambda$. So we have
\[0\geq\frac{dE(\omega^s)}{ds}|_{s=0^+}=\int_Dz_0\psi_\lambda dx-\int_Dz_1\psi_\lambda dx,\]
where $\psi_\lambda=G\omega_\lambda$.
 This gives
\[\sup_{\{\omega_\lambda<\lambda\}\cap B_r(x_1)}\psi_\lambda\leq\inf_{\{\omega_\lambda>0\}\cap B_r(x_1)}\psi_\lambda.\]
Since $\overline{B_r(x_1)}$ is connected and $\overline{\{\omega_\lambda<\lambda\}\cap B_r(x_1)}\cup\overline{\{\omega_\lambda>0\}\cap B_r(x_1)}=\overline{B_r(x_1)}$, we have $\overline{\{\omega_\lambda<\lambda\}\cap B_r(x_1)}\cap\overline{\{\omega_\lambda>0\}\cap B_r(x_1)}\neq\varnothing$, then by the continuity of $\psi_\lambda$ we deduce that
\[\sup_{\{\omega_\lambda<\lambda\}\cap B_r(x_1)}\psi_\lambda=\inf_{\{\omega_\lambda>0\}\cap B_r(x_1)}\psi_\lambda.\]

Define \[\mu_\lambda:=\sup_{\{\omega_\lambda<\lambda\}\cap B_r(x_1)}\psi_\lambda=\inf_{\{\omega_\lambda>0\}\cap B_r(x_1)}\psi_\lambda,\]
then it is easy to check that
\begin{equation}
\begin{cases}
\omega_\lambda=0\text{\,\,\,\,\,\,a.e. in} \text{\,\,}\{\psi_\lambda<\mu_\lambda\}\cap B_r(x_1),
 \\ \omega_\lambda=\lambda\text{\,\,\,\,\,\,a.e. in } \text{\,\,}\{\psi_\lambda>\mu_\lambda\}\cap B_r(x_1).
\end{cases}
\end{equation}
On the level set $\{\psi_\lambda=\mu_\lambda\}\cap B_r(x_1)$, we have $\nabla\psi_\lambda=0\text{\,\,}$ a.e. by the property of Sobolev functions, thus $\omega_\lambda=-\Delta \psi_\lambda=0$ a.e.. To summarize, we have obtained
\begin{equation}
\begin{cases}
\omega_\lambda=0\text{\,\,\,\,\,\,a.e.} \text{\,\,}\{\psi_\lambda\leq\mu_\lambda\}\cap B_r(x_1),
 \\ \omega_\lambda=\lambda\text{\,\,\,\,\,\,a.e.} \text{\,\,}\{\psi_\lambda>\mu_\lambda\}\cap B_r(x_1),
\end{cases}
\end{equation}
or equivalently, $\omega_\lambda=\lambda I_{\{\psi_\lambda>\mu_\lambda\}}\cap B_r(x_1)$.

Finally, by taking $r$ small and $\lambda$ large we have $\mu_\lambda>0$.
\end{proof}

Now we estimate the size and location of $supp(\omega_\lambda)$ as $\lambda\rightarrow +\infty$. This is somewhat different from \cite{T}. Define $\zeta_\lambda:=\psi_\lambda-\mu_\lambda$,  $\Omega:=\{\psi_\lambda>\mu_\lambda\}\cap B_r(x_1)$ which is called the vortex core, and \[T(\omega_\lambda):= \frac{1}{2}\int_D\zeta_\lambda\omega_\lambda dx, \] which represents the kinetic energy of $\omega_\lambda$ on $\{\psi_\lambda>\mu_\lambda\}$. Note that $\{\zeta_\lambda>0\}\subset\subset D$ since $\mu_\lambda>0$.

Obviously we have the identity $E(\omega_\lambda)=T(\omega_\lambda)+\frac{1}{2}\mu_\lambda$. Moreover, integration by parts gives
\[\begin{split}
T(\omega_\lambda)
=&\frac{1}{2}\int_{\{\zeta_\lambda>0\}}\zeta_\lambda\omega_\lambda dx\\
=&\frac{1}{2}\int_{\{\zeta_\lambda>0\}}\zeta_\lambda(-\Delta\zeta_\lambda) dx\\
=&\frac{1}{2}\int_{\{\zeta_\lambda>0\}}|\nabla\zeta_\lambda|^2 dx\\
=&\frac{1}{2}\int_D|\nabla\zeta_\lambda^+|^2 dx.
\end{split}\]
Here we used $\{\zeta_\lambda>0\}\subset\subset D$.

\begin{lemma}
$T(\omega_\lambda)\leq C$, where $C$ is a positive number not depending on $\lambda$.
\end{lemma}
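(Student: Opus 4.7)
The plan, following Turkington, is to establish sharp two-sided bounds $E(\omega_\lambda) = \frac{\log\lambda}{8\pi} + O(1)$ and then combine them with a Saint-Venant estimate to bound $T$ via the identity $T(\omega_\lambda) = E(\omega_\lambda) - \frac{1}{2}\mu_\lambda$.

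I would first obtain the upper bound on $E$ by decomposing $G(x,y) = \frac{1}{2\pi}\log\frac{1}{|x-y|} - h(x,y)$. Since $\mathrm{supp}\,\omega_\lambda \subset \overline{B_r(x_1)} \subset\subset D$, the regular part $h$ is bounded on the relevant compact set and contributes only $O(1)$. The Riesz rearrangement inequality bounds $\int\!\!\int \log\frac{1}{|x-y|}\omega_\lambda\omega_\lambda$ by its value on the Schwarz symmetrization $\omega^\ast := \lambda\mathbf{1}_{B^\ast}$, $|B^\ast| = 1/\lambda$, and a direct radial computation evaluates this as $\frac{1}{2}\log\lambda + O(1)$. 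Thus $E(\omega_\lambda) \le \frac{\log\lambda}{8\pi} + C_1$. For the matching lower bound, I would use the test function $\tilde\omega_\lambda := \lambda\mathbf{1}_{B_\varepsilon(x_1)}$ with $\pi\varepsilon^2 = 1/\lambda$, which lies in $N_\lambda$ for $\lambda$ large; by the maximizing property of $\omega_\lambda$, $E(\omega_\lambda) \ge E(\tilde\omega_\lambda) = \frac{\log\lambda}{8\pi} - \frac{H(x_1)}{2} + o(1)$.

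Next, combining the two-sided bound with the strict local minimality of $H$ at $x_1$, I would deduce that $\omega_\lambda$ concentrates at $x_1$: any loss of concentration would make $\int\!\!\int h\,\omega_\lambda\omega_\lambda$ asymptotically strictly larger than $H(x_1)$ and contradict the matching lower bound on $E$. In particular, $V_\lambda := \{\psi_\lambda > \mu_\lambda\} \subset B_r(x_1)$ for $\lambda$ large, whence $\omega_\lambda = \lambda\mathbf{1}_{V_\lambda}$ and $\zeta_\lambda^+$ solves $-\Delta\zeta_\lambda^+ = \lambda$ in $V_\lambda$ with $\zeta_\lambda^+ = 0$ on $\partial V_\lambda$. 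Writing $\zeta_\lambda^+ = \lambda v_{V_\lambda}$, where $v_{V_\lambda}$ is the torsion function of $V_\lambda$, the Saint-Venant inequality $\int_{V_\lambda} v_{V_\lambda}\,dx \le |V_\lambda|^2/(8\pi)$ together with $|V_\lambda| = 1/\lambda$ yields
\[
T(\omega_\lambda) \;=\; \frac{1}{2}\int_D \omega_\lambda\zeta_\lambda^+\,dx \;=\; \frac{\lambda^2}{2}\int_{V_\lambda} v_{V_\lambda}\,dx \;\le\; \frac{1}{16\pi}.
\]

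The main obstacle is the concentration step: making the implication ``near-saturation of the Riesz inequality $\Rightarrow$ concentration at $x_1$'' quantitative, and in particular deducing $V_\lambda \subset B_r(x_1)$, requires either a quantitative stability version of the Riesz rearrangement inequality or a careful bootstrap argument using the $E$-asymptotics. Avoiding circularity is delicate, because diameter and concentration bounds for vortex patches typically follow from a $T$-bound rather than the other way around; here one must break the loop using the strict minimality of $H$ at $x_1$ in an essential way.
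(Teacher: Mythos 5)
There is a genuine gap, and you have correctly located it yourself: your argument needs the concentration statement $V_\lambda=\{\psi_\lambda>\mu_\lambda\}\subset B_r(x_1)$ \emph{before} it can identify $\zeta_\lambda^+$ with $\lambda$ times a torsion function and invoke Saint--Venant, but in the Turkington scheme that concentration is a \emph{consequence} of the bound $T(\omega_\lambda)\le C$ (one first gets $\mu_\lambda\ge-\frac{1}{2\pi}\ln\varepsilon-C$ from $E=T+\frac12\mu_\lambda$ together with $T\le C$ and the energy lower bound, and only then the diameter and location estimates). As written the loop is not broken. Two further points: (a) the mechanism you propose for concentration is misattributed --- if the patch spreads or splits at scale $d\gg\varepsilon$ the energy deficit appears in the logarithmic self-interaction term (of order $\log(d/\varepsilon)$), not in the $h$-term, which stays close to $H(x_1)$ for any configuration inside $B_r(x_1)$; and (b) even a sharp two-sided bound on $E$ only constrains where the mass of $\omega_\lambda$ sits, whereas $\{\psi_\lambda>\mu_\lambda\}$ may a priori have components outside $B_r(x_1)$ carrying no vorticity (this is exactly the issue raised in Remark 2.4 of the paper), and ruling those out again requires $\mu_\lambda\to+\infty$, i.e.\ the $T$-bound.

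The paper's proof avoids all of this and is far more elementary: it is a direct bootstrap with no energy asymptotics, no rearrangement inequality, and no concentration. Writing $\Omega=\{\psi_\lambda>\mu_\lambda\}\cap B_r(x_1)$, one has $T=\frac12\lambda\int_\Omega\zeta_\lambda\le\frac12\lambda|\Omega|^{1/2}\|\zeta_\lambda^+\|_{L^2(B_r(x_1))}$ by H\"older, then the two-dimensional embedding $W^{1,1}(B_r(x_1))\hookrightarrow L^2(B_r(x_1))$ gives $\|\zeta_\lambda^+\|_{L^2}\le C(\int_\Omega\zeta_\lambda+\int_\Omega|\nabla\zeta_\lambda|)$; using $\lambda|\Omega|=1$ and $\int_{\{\zeta_\lambda>0\}}|\nabla\zeta_\lambda|^2=2T$ this closes to $T\le CT\lambda^{-1/2}+CT^{1/2}$, whence $T\le C$ for $\lambda$ large. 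If you want to salvage your route, the honest path is to prove the $T$-bound first by such an argument (or otherwise) and only afterwards derive $E\ge-\frac{1}{4\pi}\ln\varepsilon-C$, $\mu_\lambda\to+\infty$, and the inclusion $V_\lambda\subset B_r(x_1)$; at that point your Saint--Venant computation does yield the sharp constant $T\le\frac{1}{16\pi}$, which is a nice refinement but cannot serve as the first step.
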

\begin{proof}

First we apply H\"{o}lder inequality to obtain
\[\begin{split}
T(\omega_\lambda)=&\frac{1}{2}\int_D\zeta_\lambda\omega_\lambda dx=\frac{1}{2}\lambda\int_{\Omega}\zeta_\lambda dx\leq\frac{1}{2}\lambda|\Omega|^{\frac{1}{2}}\left(\int_{\Omega}|\zeta_\lambda|^2dx\right)^{\frac{1}{2}}
=\frac{1}{2}\lambda|\Omega|^{\frac{1}{2}}\left(\int_{B_r(x_1)}|\zeta^+_\lambda|^2dx\right)^{\frac{1}{2}}.
\end{split}\]
On the other hand, by Sobolev embedding $W^{1,1}({B_r(x_1))}\hookrightarrow L^2({B_r(x_1))}$, we have
\[\begin{split}
\left(\int_{B_r(x_1)}|\zeta^+_\lambda|^2 dx\right)^{\frac{1}{2}}
\leq& C\left(\int_{B_r(x_1)}\zeta^+_\lambda dx+\int_{B_r(x_1)}|\nabla\zeta^+_\lambda|dx\right)\\
=&C\left(\int_{\Omega}\zeta^+_\lambda dx+\int_{\Omega}|\nabla\zeta^+_\lambda|dx\right).
\end{split}\]
Here and in the sequel we use $C$ to denote various positive numbers independent of $\lambda$.
Therefore
\[\begin{split}
T(\omega_\lambda)\leq&\frac{1}{2}C\lambda|\Omega|^{\frac{1}{2}}\int_{\Omega}\zeta_\lambda dx+\frac{1}{2}C\lambda|\Omega|^{\frac{1}{2}}\int_{\Omega}|\nabla\zeta_\lambda|dx\\
\leq&CT(\omega_\lambda)|\Omega|^{\frac{1}{2}}+\frac{1}{2}C\lambda|\Omega|\left(\int_{\Omega}|\nabla\zeta_\lambda|^2dx\right)^{\frac{1}{2}}\\
\leq&CT(\omega_\lambda)\lambda^{-\frac{1}{2}}+\frac{1}{2}C\left(T(\omega_\lambda)\right)^{\frac{1}{2}},
\end{split}\]
where we used $\lambda|\Omega|=\int_D\omega_\lambda=1$. By choosing $\lambda$ large enough such that $C\lambda^{-{1}/{2}}<\frac{1}{2}$, we deduce that $T(\omega_\lambda)\leq C$, which is the desired result.

\end{proof}
\begin{lemma}

$E(\omega_\lambda)\geq-\frac{1}{4\pi}\ln \varepsilon-C$, where $\varepsilon={1}/{\sqrt{\lambda\pi}}$.
\end{lemma}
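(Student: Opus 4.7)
The plan is to establish the lower bound by exhibiting an explicit competitor in $N_\lambda$ whose energy has the claimed asymptotic size, and then invoking the maximizing property of $\omega_\lambda$. The natural candidate is the uniform patch $\tilde{\omega}_\lambda\triangleq \lambda I_{B_\varepsilon(x_1)}$. Since $\lambda\pi\varepsilon^2=1$ by definition of $\varepsilon$, we have $\int_D\tilde{\omega}_\lambda=1$, and for $\lambda$ large enough $\varepsilon<r$, so that $\tilde{\omega}_\lambda\in N_\lambda$. By the maximizing property established in the previous lemma, $E(\omega_\lambda)\geq E(\tilde{\omega}_\lambda)$, so it suffices to estimate $E(\tilde{\omega}_\lambda)$.

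Splitting $G(x,y)=\frac{1}{2\pi}\ln\frac{1}{|x-y|}-h(x,y)$, I would compute the two pieces separately. For the singular part, the change of variables $x=x_1+\varepsilon u$, $y=x_1+\varepsilon v$ gives
\[
\frac{\lambda^2}{4\pi}\int_{B_\varepsilon(x_1)}\!\int_{B_\varepsilon(x_1)}\ln\frac{1}{|x-y|}\,dx\,dy
=\frac{\lambda^2\varepsilon^4}{4\pi}\Bigl[\pi^2\ln\tfrac{1}{\varepsilon}+\int_{B_1}\!\int_{B_1}\ln\tfrac{1}{|u-v|}\,du\,dv\Bigr],
\]
and using $\lambda^2\varepsilon^4=1/\pi^2$ this simplifies to $-\frac{1}{4\pi}\ln\varepsilon+C_1$, where $C_1$ is an explicit absolute constant. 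For the regular part, $h$ is continuous on a neighbourhood of $(x_1,x_1)$, so it is uniformly bounded on $\overline{B_r(x_1)}\times\overline{B_r(x_1)}$; combined with $\lambda^2|B_\varepsilon(x_1)|^2=1$, this gives
\[
\Bigl|\tfrac{\lambda^2}{2}\int_{B_\varepsilon(x_1)}\!\int_{B_\varepsilon(x_1)}h(x,y)\,dx\,dy\Bigr|\leq C_2,
\]
uniformly in $\lambda$. Adding the two contributions yields $E(\tilde{\omega}_\lambda)\geq -\frac{1}{4\pi}\ln\varepsilon-C$, and hence the same bound for $E(\omega_\lambda)$.

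There is no real obstacle here; the proof is a direct energy computation on an explicit test patch. The only small point to be careful about is to make sure $B_\varepsilon(x_1)\subset B_r(x_1)$, i.e.\ $\varepsilon\leq r$, which is automatic for $\lambda$ large since $\varepsilon=(\lambda\pi)^{-1/2}\to 0$. Everything else reduces to the scaling $\lambda\pi\varepsilon^2=1$, the explicit integral of $\ln(1/|u-v|)$ over the unit ball (a finite constant), and continuity of $h$ near $(x_1,x_1)$.
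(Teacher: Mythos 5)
Your proposal is correct and follows essentially the same route as the paper: both test the maximizing property against the explicit patch $\lambda I_{B_\varepsilon(x_1)}$, split $G$ into its logarithmic and regular parts, and bound each piece using $\lambda\pi\varepsilon^2=1$ and the boundedness of $h$ near $(x_1,x_1)$. The only cosmetic difference is that you evaluate the logarithmic integral exactly by rescaling to the unit ball, whereas the paper settles for the cruder (but sufficient) bound $|x-y|\leq 2\varepsilon$.
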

\begin{proof}

Define $\bar{\omega}_\lambda=\lambda I_{B_\varepsilon(x_1)}$. It is easy to see that $\bar{\omega}_\lambda\in N_\lambda$, so we have $E(\omega_\lambda)\geq E(\bar{\omega}_\lambda)$. Now we calculate $E(\bar{\omega}_\lambda)$:

\begin{equation}\label{noon}
\begin{split}
E(\bar{\omega}_\lambda)=&\frac{1}{2}\int_D\int_DG(x,y)\bar{\omega}_\lambda(x)\bar{\omega}_\lambda(y)dxdy\\
=&-\frac{1}{4\pi}\int_D\int_D\ln|x-y|\bar{\omega}_\lambda(x)\bar{\omega}_\lambda(y)dxdy-\frac{1}{2}\int_D\int_Dh(x,y)\bar{\omega}_\lambda(x)\bar{\omega}_\lambda(y)dxdy\\
=&-\frac{\lambda^2}{4\pi}\int_{B_\varepsilon(x_1)}\int_{B_\varepsilon(x_1)}\ln|x-y|dxdy-\frac{1}{2}\int_{B_\varepsilon(x_1)}\int_{B_\varepsilon(x_1)}h(x,y)dxdy.
\end{split}
\end{equation}
Since $|x-y|\leq2\varepsilon$ for $x,y\in B_{\varepsilon}(x_1)$, we have
\begin{equation}\label{noon2}
\begin{split}
-\frac{\lambda^2}{4\pi}\int_{B_\varepsilon(x_1)}\int_{B_\varepsilon(x_1)}\ln|x-y|dxdy
\geq& -\frac{\lambda^2}{4\pi}\int_{B_\varepsilon(x_1)}\int_{B_\varepsilon(x_1)}\ln|2\varepsilon|dxdy\\
=&-\frac{1}{4\pi}\ln\varepsilon-\frac{1}{4\pi}\ln2.
\end{split}
\end{equation}
On the other hand, the integral $\int_{B_\varepsilon(x_1)}\int_{B_\varepsilon(x_1)}h(x,y)dxdy$ converges to $h(x_1,x_1)$ as $\lambda \rightarrow +\infty$, therefore is bounded, or equivalently
\begin{equation}\label{noon3}
\big|\int_{B_\varepsilon(x_1)}\int_{B_\varepsilon(x_1)}h(x,y)dxdy\big|\leq C.
\end{equation}
Taking into account \eqref{noon} \eqref{noon2} and \eqref{noon3} we get
\[E(\omega_\lambda)\geq-\frac{1}{4\pi}\ln \varepsilon-C.\]

\end{proof}

From Lemma 4.2, Lemma 4.3 and the identity $E(\omega_\lambda)=T(\omega_\lambda)+\mu_\lambda/2$, we immediately obtain

\begin{lemma}
$\mu_\lambda\geq -\frac{1}{2\pi}\ln \varepsilon- C$.
\end{lemma}

Now we show that the size of $supp(\omega_\lambda)$ is of order $\varepsilon$.
\begin{lemma}
There exists some $R_0>0$ such that $diam(supp(\omega_\lambda))<R_0\varepsilon$ when $\lambda$ is large enough.
\end{lemma}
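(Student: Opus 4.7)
The plan is to leverage the sharp lower bound on $\mu_\lambda$ that is already hidden in Lemmas 4.2--4.3: combining $E(\omega_\lambda)\geq -\frac{1}{4\pi}\ln\varepsilon - C$ with $T(\omega_\lambda)\leq C$ and $E(\omega_\lambda) = T(\omega_\lambda) + \frac{1}{2}\mu_\lambda$ gives
\[
\mu_\lambda \geq -\frac{1}{2\pi}\ln\varepsilon - C.
\]
Since $\omega_\lambda = \lambda I_{\{\psi_\lambda>\mu_\lambda\}\cap B_r(x_1)}$ and $\psi_\lambda\in C^{1,\alpha}(\overline{D})$ by standard $L^p$-estimates applied to $-\Delta\psi_\lambda=\omega_\lambda\in L^\infty$, every point $x$ of $\text{supp}(\omega_\lambda)$ satisfies $\psi_\lambda(x)\geq\mu_\lambda$.

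Next I would fix such an $x$ together with a radius $s\in(0,1)$ to be chosen later, write $G(x,y) = \frac{1}{2\pi}\ln\frac{1}{|x-y|} - h(x,y)$, and use that $h$ is bounded on $\overline{B_r(x_1)}\times\overline{B_r(x_1)}$ to reduce the inequality $\psi_\lambda(x)\geq\mu_\lambda$ to
\[
\frac{1}{2\pi}\int_D\ln\frac{1}{|x-y|}\,\omega_\lambda(y)\,dy \geq -\frac{1}{2\pi}\ln\varepsilon - C.
\]
Set $m(s) := \int_{B_s(x)}\omega_\lambda\,dy$ and split this integral at $|x-y|=s$. On $D\setminus B_s(x)$ the pointwise bound $\ln(1/|x-y|)\leq\ln(1/s)$ gives a contribution at most $\frac{1-m(s)}{2\pi}\ln\frac{1}{s}$. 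On $B_s(x)$, the bathtub principle (applied to $0\leq\omega_\lambda\leq\lambda$ with mass $m(s)$) combined with the radial computation $\int_0^\rho r\ln(1/r)\,dr = -\frac{\rho^2}{2}\ln\rho + \frac{\rho^2}{4}$ and the relation $\lambda\pi\varepsilon^2=1$ yields
\[
\frac{1}{2\pi}\int_{B_s(x)}\ln\frac{1}{|x-y|}\,\omega_\lambda\,dy \leq -\frac{m(s)}{2\pi}\ln\varepsilon -\frac{m(s)}{4\pi}\ln m(s) + \frac{m(s)}{4\pi}.
\]
Combining the two halves and absorbing the uniformly bounded $m\log m$-type terms into the constant produces the pointwise concentration estimate
\[
(1-m(s))\,\ln\frac{s}{\varepsilon} \leq C', \qquad x\in\text{supp}(\omega_\lambda),
\]
with $C'$ independent of $\lambda$ and $x$.

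To finish I would argue by contradiction. Suppose $\text{diam}(\text{supp}(\omega_\lambda))\geq R\varepsilon$ for some large $R$ and pick $x^1,x^2\in\text{supp}(\omega_\lambda)$ with $|x^1-x^2|\geq R\varepsilon$, so that the balls $B_{R\varepsilon/3}(x^j)$ are disjoint. Applying the concentration estimate with $s=R\varepsilon/3$ at each $x^j$ gives $m(x^j,R\varepsilon/3)\geq 1 - C'/\ln(R/3)$, hence
\[
1 = \int_D\omega_\lambda \geq m(x^1,R\varepsilon/3) + m(x^2,R\varepsilon/3) \geq 2 - \frac{2C'}{\ln(R/3)},
\]
which forces $R \leq 3e^{2C'}$. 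Taking $R_0 := 3e^{2C'}+1$ completes the proof.

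The step I expect to be the main obstacle is the bathtub estimate: one has to verify that the symmetric decreasing rearrangement of $\omega_\lambda|_{B_s(x)}$ fits inside $B_s(x)$, i.e.\ $\varepsilon\sqrt{m(s)}\leq s$ (automatic once $s\geq\varepsilon$, which is satisfied when we eventually set $s=R\varepsilon/3$ with $R\geq 3$), and that the subleading $-\frac{m}{4\pi}\ln m$ and $\frac{m}{4\pi}$ terms coming out of the radial integral are harmless because $m\in[0,1]$. Everything else is bookkeeping on the inequality $\psi_\lambda(x)\geq\mu_\lambda$ combined with the sharp lower bound on $\mu_\lambda$ furnished by Lemmas 4.2--4.3.
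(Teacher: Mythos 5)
Your proposal is correct and follows essentially the same route as the paper: both arguments start from $\psi_\lambda(x)\geq\mu_\lambda\geq-\frac{1}{2\pi}\ln\varepsilon-C$ on the support, discard the bounded regular part $h$, split the logarithmic potential at a radius of order $R\varepsilon$, control the near-field piece by a bathtub/rearrangement bound (the paper gets the explicit value $\tfrac12$ by comparing with $\lambda I_{B_\varepsilon(x)}$, you track the partial mass $m(s)$ and get the equivalent estimate $(1-m)\ln(s/\varepsilon)\leq C'$), and finish with the same two-disjoint-balls mass-counting contradiction. The differences are only bookkeeping.
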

\begin{proof}
For any $x\in supp(\omega_\lambda)$, we have by definition $\psi_\lambda(x)\geq\mu_\lambda$, that is,
\begin{equation}\label{48}
\int_DG(x,y)w_\lambda(y)dy\geq-\frac{1}{2\pi}\ln \varepsilon -C.
\end{equation}
On the other hand, since $h(x,y)$ is bounded from below in $D\times D,$ we have
\begin{equation}\label{49}
\int_DG(x,y)w_\lambda(y)dy \leq -\frac{1}{2\pi}\int_D\ln|x-y|\omega_\lambda(y)dy+C.
\end{equation}
Combining $\eqref{48}$ and $\eqref{49}$ and by simple calculation, we can easily get
\begin{equation}\label{lnn}
\int_D\ln\frac{\varepsilon}{|x-y|}\omega_\lambda(y)dy\geq C.
\end{equation}

Let $R>1$ be a number to be determined. we divide the integral on the left side of \eqref{lnn} into two parts
\begin{equation}\label{50}
\int_{B_{R\varepsilon}(x)}\ln\frac{\varepsilon}{|x-y|}\omega_\lambda(y)dy+\int_{D\verb|\|B_{R\varepsilon}(x)}\ln\frac{\varepsilon}{|x-y|}\omega_\lambda(y)dy\geq C.
\end{equation}
The first integral in $\eqref{50}$ can be estimated by the rearrangement inequality as follows
\[\int_{B_{R\varepsilon}(x)}\ln\frac{\varepsilon}{|x-y|}\omega_\lambda(y)dy\leq \lambda\int_{B_\varepsilon(x)}\ln\frac{\varepsilon}{|x-y|}dy=\frac{1}{2}.\]
So we obtain
\begin{equation}\label{dont}
\int_{D\verb|\|B_{R\varepsilon}(x)}\ln\frac{\varepsilon}{|x-y|}\omega_\lambda(y)dy \geq C.
\end{equation}
Notice that $|x-y|\geq R\varepsilon$ for any $y\in B_{R\varepsilon}(x)$, so we obtain from \eqref{dont}
\[\int_{D\verb|\|B_{R\varepsilon}(x)}\ln\frac{\varepsilon}{|x-y|}\omega_\lambda(y)dy\leq\int_{D\verb|\|B_{R\varepsilon}(x)}\ln\frac{1}{R}\omega_\lambda(y)dy,\]
or equivalently,
\[\int_{D\verb|\|B_{R\varepsilon}(x)}\omega_\lambda(y)dy\leq  \frac{C}{\ln R}.\]
Taking into account the fact $\int_D\omega_\lambda dx=1,$ we get
\[\int_{B_{R\varepsilon}(x)}\omega_\lambda(y)dy\geq 1-  \frac{C}{\ln R}.\]
Choosing $R$ large such that $ 1- \frac{C}{\ln R}>\frac{1}{2}$, we obtain
\begin{equation}\label{arb}
\int_{B_{R\varepsilon}(x)}\omega_\lambda(y)dy>\frac{1}{2}.
\end{equation}
Since $\int_D\omega_\lambda=1 dx$  and \eqref{arb} holds true for arbitrary $x\in supp(\omega_\lambda)$, we deduce that
\[diam(supp\omega_\lambda)<2R\varepsilon.\]
Thus the lemma is proved by choosing $R_0=2R$.

\end{proof}
Now we estimate the location of $supp(\omega_\lambda)$.
\begin{lemma}
 $\lim_{\lambda\to+\infty}\int_Dx\omega_\lambda(x)dx= x_1$.
\end{lemma}
\begin{proof}
Denote $x_\lambda:= \int_Dx\omega_\lambda(x)dx$, then obviously $x_\lambda\in \overline{B_r(x_1)}$. For any sequence $\{x_{\lambda_j}\},\lambda_j\rightarrow +\infty$, there exists a subsequence $\{x_{\lambda_{j_k}}\}$ such that
$x_{\lambda_{j_k}}\rightarrow z_1\in \overline{B_r(x_1)}$. For simplicity, we still denote the subsequence by $\{x_{\lambda_k}\}$.  It suffices to show that $z_1=x_1$.

Define $\bar{\omega}_\lambda:= \lambda I_{B_\varepsilon(x_1)}$. Since $ E(\bar{\omega}_\lambda)\leq E(\omega_\lambda)$, we obtain
\begin{equation}\label{ries}
\begin{split}
\int_D\int_D -\frac{1}{2\pi}\ln|x-y|\bar{\omega}_{\lambda_k}(x)\bar{\omega}_{\lambda_k}(y)dxdy-\int_D\int_Dh(x,y)\bar{\omega}_{\lambda_k}(x)\bar{\omega}_{\lambda_k}(y)dxdy \\
\leq \int_D\int_D -\frac{1}{2\pi}\ln|x-y|\omega_{\lambda_k}(x)\omega_{\lambda_k}(y)dxdy-\int_D\int_Dh(x,y)\omega_{\lambda_k}(x)\omega_{\lambda_k}(y)dxdy.
\end{split}
\end{equation}
By Riesz's rearrangement inequality (see \cite{LL}, 3.7),
\begin{equation}\label{rie}
\begin{split}
\int_D\int_D -\frac{1}{2\pi}\ln|x-y|\bar{\omega}_{\lambda_k}(x)\bar{\omega}_{\lambda_k}(y)dxdy
\geq \int_D\int_D -\frac{1}{2\pi}\ln|x-y|\omega_{\lambda_k}(x)\omega_{\lambda_k}(y)dxdy.
\end{split}
\end{equation}
Combining \eqref{ries} and \eqref{rie} we get
\begin{equation}\label{jdjd}\begin{split}
\int_D\int_D h(x,y)\bar{\omega}_{\lambda_k}(x)\bar{\omega}_{\lambda_k}(y)dxdy \geq \int_D\int_D h(x,y)\omega_{\lambda_k}(x)\omega_{\lambda_k}(y)dxdy.
\end{split}
\end{equation}
Passing $k\rightarrow +\infty$ in \eqref{jdjd}, we have $h(z_1,z_1)\leq h(x_1,x_1)$, that is, $H(z_1)\leq H(x_1)$. Since $x_1$ is the unique minimum point of $H(x)$ in $\overline{B_r(x_1)}$, we obtain $z_1=x_1$, which is the desired result.
\end{proof}

\begin{lemma}
$\omega_\lambda$ is a steady solution of the vorticity equation, that is, satisfies $\eqref{888}$, provided that $\lambda$ is large enough.
\end{lemma}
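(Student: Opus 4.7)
The plan is to show that for large $\lambda$ the support constraint $\mathrm{supp}(\omega)\subset B_r(x_1)$ defining $N_\lambda$ is inactive at $\omega_\lambda$, so that $\omega_\lambda$ is a critical point of $E$ under arbitrary area-preserving deformations — which is precisely what $\eqref{888}$ encodes. Strict interior containment comes directly from the preceding lemmas: setting $x_\lambda:=\int_D x\,\omega_\lambda(x)\,dx$, Lemma 4.5 gives $x_\lambda\to x_1$, and since $x_\lambda$ lies in the convex hull of $\mathrm{supp}(\omega_\lambda)$, Lemma 4.4 yields $\mathrm{supp}(\omega_\lambda)\subset\overline{B_{R_0\varepsilon}(x_\lambda)}$; taking $\lambda$ large enough that $|x_\lambda-x_1|<r/4$ and $R_0\varepsilon<r/4$ gives $\mathrm{supp}(\omega_\lambda)\subset B_{r/2}(x_1)$.

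For the variation, given an arbitrary $\phi\in C_0^\infty(D)$, set $V:=J\nabla\phi$, which is smooth, compactly supported in $D$, and divergence-free; its flow $\Phi_t$ is a smooth area-preserving diffeomorphism of $\overline{D}$ with $\|\Phi_t-\mathrm{id}\|_\infty\leq |t|\,\|V\|_\infty$. Then $\omega_\lambda^t:=\omega_\lambda\circ\Phi_{-t}$ belongs to $F_{\omega_\lambda}$, and for $|t|$ small enough its support $\Phi_t(\mathrm{supp}(\omega_\lambda))$ remains inside $B_r(x_1)$, so $\omega_\lambda^t\in N_\lambda$. Maximality of $\omega_\lambda$ on $N_\lambda$ therefore forces $\frac{d}{dt}\big|_{t=0}E(\omega_\lambda^t)=0$. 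Pushing the $t$-dependence onto the kernel via the area-preserving change of variables $u=\Phi_{-t}(x)$ gives
$$ E(\omega_\lambda^t)=\tfrac{1}{2}\int_D\!\!\int_D G(\Phi_t(u),\Phi_t(v))\,\omega_\lambda(u)\omega_\lambda(v)\,du\,dv, $$
and differentiating under the integral, using symmetry of $G$ together with $\frac{d}{dt}\big|_{t=0}\Phi_t(u)=V(u)$, collapses the critical point condition to $\int_D\omega_\lambda\,V\cdot\nabla\psi_\lambda\,dx=0$, i.e.\ $\int_D\omega_\lambda\,\partial(\psi_\lambda,\phi)\,dx=0$; the antisymmetry $\partial(\psi_\lambda,\phi)=-\partial(\phi,\psi_\lambda)$ then produces $\int_D\omega_\lambda\,\partial(\phi,\psi_\lambda)\,dx=0$ for every $\phi\in C_0^\infty(D)$, which is exactly $\eqref{888}$.

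The main technical point is justifying differentiation under the integral, because $G(\Phi_t(u),\Phi_t(v))$ has a logarithmic singularity on the diagonal $u=v$. This is handled by decomposing $G=\frac{1}{2\pi}\ln\frac{1}{|\cdot-\cdot|}-h$ and observing that the singular parts of $\nabla_x G$ and $\nabla_y G$ cancel, so that
$$ \nabla_x G\cdot V(x)+\nabla_y G\cdot V(y)=\nabla_x G\cdot(V(x)-V(y))+(\nabla_x G+\nabla_y G)\,V(y) $$
is bounded near the diagonal by the smoothness of $V$, with uniform control for $t$ in a neighborhood of $0$. The limiting expression $\int_D\omega_\lambda\,V\cdot\nabla\psi_\lambda$ is clearly well-defined because $\omega_\lambda\in L^\infty$ and $\nabla\psi_\lambda$ is continuous by standard elliptic regularity for $-\Delta\psi_\lambda=\omega_\lambda\in L^\infty(D)$.
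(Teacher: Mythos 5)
Your proof is correct and follows essentially the same route as the paper: perturb $\omega_\lambda$ along the area-preserving flow of $J\nabla\phi$, note the perturbation stays in $N_\lambda$ for small $|t|$, and read off $\eqref{888}$ from the vanishing first variation of $E$. You merely supply more detail than the paper at two points (the strict interior containment of $\mathrm{supp}(\omega_\lambda)$ in $B_r(x_1)$ via Lemmas 4.4--4.5, and the justification of differentiating under the integral), both of which are sound.
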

\begin{proof}
For any $\xi\in C^{\infty}_c(D)$, we define a family of $C^1$ transformations $\Phi_t(x), t\in(-\infty,+\infty)$, from $D$ to $D$ by solving the following ODE,
\begin{equation}\label{400}
\begin{cases}\frac{d\Phi_t(x)}{dt}=J\nabla\xi(\Phi_t(x)),\,\,\,t\in\mathbb R, \\
\Phi_0(x)=x,
\end{cases}
\end{equation}
where $J$ denotes clockwise rotation through ${\pi}/{2}$ as before. Note that $\eqref{400}$ is solvable for all $t$ since $J\nabla\xi$ is a smooth vector field with compact support in $D$. It is easy to see that $J\nabla\xi$ is divergence-free, so by Liouville theorem(see \cite{MP2}, Appendix 1.1), $\Phi_t(x)$ is an area-preserving transformation for any fixed $t$, that is,
\[\Phi_t(A)=A,\,\,\forall\, A\subset D.\]
 Now we define a family of test functions
\begin{equation}
\omega_t(x):=\omega_\lambda(\Phi_t(x)).
\end{equation}
It is easy to check that $\omega_t\in R_{\omega_\lambda}$. Since $supp(\omega_\lambda)$ shrinks to $x_1$ as $\lambda\to+\infty,$ so $dist(supp(\omega_\lambda, \partial B_r(x_1)))>0$ if $\lambda$ is large enough. Therefore we have $\omega_t\in N_\lambda$ if $|t|$ is small,  from which we obtain \begin{equation}\label{deri}\frac{dE(\omega_t)}{dt}\bigg|_{t=0}=0.\end{equation}
 For $|t|<<1$, we expand $E(\omega_t)$ at $t=0$ as follows
\begin{equation}\label{expan}
\begin{split}
E(\omega_t)=&\frac{1}{2}\int_D\int_DG(x,y)\omega_\lambda(\Phi_t(x))\omega_\lambda(\Phi_t(y))dxdy\\
=&\frac{1}{2}\int_D\int_DG(\Phi_{-t}(x),\Phi_{-t}(y))\omega_\lambda(x)\omega_\lambda(y)dxdy\\
=&E(\omega_\lambda)+t\int_D\omega_\lambda\nabla G\omega_\lambda\cdot J\nabla\xi dx+o(t).
\end{split}
\end{equation}
Combining \eqref{deri} and \eqref{expan} together we immediately get
\[\int_D\omega_\lambda\nabla G\omega_\lambda\cdot J\nabla\xi dx=0,\]
which completes the proof Lemma 4.6.
\end{proof}

\begin{proof}[Proof of Theorem 2.3]
It follows from Lemma 4.1, Lemma 4.4, Lemma 4.5 and Lemma 4.6.
\end{proof}

\noindent{\bf Acknowledgments:}
 Daomin Cao was supported by NNSF of China Grant (No. 11831009) and Chinese Academy of Sciences by Grant QYZDJ-SSW-SYS021. Guodong Wang was supported by NNSF of China Grant (No.11771469).

\end{document}